\documentclass[10pt]{article}
\usepackage[T1]{fontenc}
\usepackage[cp1250]{inputenc}
\usepackage{lmodern}

\usepackage[english]{babel}
\usepackage{latexsym}
\usepackage{amsmath}
\usepackage{indentfirst}
\usepackage{amsthm}
\usepackage{amssymb}
\usepackage{amsfonts}
\usepackage{stackrel}
\usepackage{dsfont}
\usepackage{tikz}
\usepackage{slashbox}
\usepackage[mathscr]{eucal}
\usepackage{authblk}
\usepackage{hyperref}
\usepackage{mathabx}
\usepackage{bookmark}

\newtheorem{thm}{Theorem}
\newtheorem{lemma}[thm]{Lemma}

\newcommand{\reals}{\mathbb{R}}
\newcommand{\naturals}{\mathbb{N}}
\newcommand{\integers}{\mathbb{Z}}
\newcommand{\complex}{\mathbb{C}}
\newcommand{\eps}{\varepsilon}
\newcommand{\hh}{\mathfrak{H}}
\newcommand{\sinc}{\text{sinc}}

\newcommand{\algebra}{\mathcal{A}}
\newcommand{\CC}{\mathcal{COS}}

\textwidth = 460pt
\oddsidemargin = 15pt
\voffset = -20pt
\hoffset = -15pt
\marginparwidth = -10pt
\marginparsep = 0pt

\begin{document}
\title{Cosine manifestations of the Gelfand transform}
\author{Mateusz Krukowski}
\affil{Institute of Mathematics, \L\'od\'z University of Technology, \\ W\'ol\-cza\'n\-ska 215, \
90-924 \ \L\'od\'z, \ Poland \\ \vspace{0.3cm} e-mail: mateusz.krukowski@p.lodz.pl}
\maketitle

\begin{abstract}
The goal of the paper is to provide a detailed explanation on how the (continuous) cosine transform and the discrete(-time) cosine transform arise naturally as certain manifestations of the celebrated Gelfand transform. We begin with the introduction of the cosine convolution $\star_c$, which can be viewed as an ``arithmetic mean'' of the classical convolution and its ``twin brother'', the anticonvolution. The d'Alembert property of $\star_c$ plays a pivotal role in establishing the bijection between $\Delta(L^1(G),\star_c)$ and the cosine class $\mathcal{COS}(G),$ which turns out to be an open map if $\mathcal{COS}(G)$ is equipped with the topology of uniform convergence on compacta $\tau_{ucc}$. Subsequently, if $G = \reals,\integers, S^1$ or $\integers_n$ we find a relatively simple topological space which is homeomorphic to $\Delta(L^1(G),\star_c).$ Finally, we witness the ``reduction'' of the Gelfand transform to the aforementioned cosine transforms. 
\end{abstract}

\smallskip
\noindent 
\textbf{Keywords : } Gelfand transform, convolution, structure space, cosine class, continuous/discrete cosine transform
\vspace{0.2cm}
\\
\textbf{AMS Mathematics Subject Classification (2010): } 42A38, 43A20, 43A32,	44A15

\section{Introduction}
\label{Chapter:Introduction}

The current introductory section is divided into three parts. The first one centres around the general framework of our work -- we recall such concepts as the structure space of a Banach algebra, the Gelfand transform or the Haar measure of a locally compact group. Since many have already covered these topics at great length, we restrict ourselves to laying out just the ``basic facts'', i.e., those that are vital in comprehending further sections of the paper. More inquisitive Readers are encouraged to study the materials in the bibliography, which are usually referenced in the footnotes. 

Second part of the introductory section aims at delivering the context for our research regarding the cosine convolution and the cosine transforms. We strive to convince the Reader that the subject of the paper belongs to an active field of mathematical study and even ``creeps into'' the neighbouring disciplines such as algorithmics, computational complexity theory, data and image compression or signal processing. Consequently, our paper can be regarded as a theoretical foundation for various applications in engineering and technology. 

Last but not least, the third and final part of the introduction serves as a brief summary of further sections. We lay out the structure of the paper in order to ``paint the big picture'' of our research prior to delving into technical subtleties.

\subsection{What is the Gelfand transform?}

Without further ado we begin with the basic principles of the Banach algebra theory. In his tour de force ``\textit{A Course in Commutative Banach Algebras}''\footnote{See \cite{Kaniuth}, p. 1.} Eberhard Kaniuth defines a \textit{normed algebra} as a normed linear space $(\algebra,\|\cdot\|)$ over the field of complex numbers $\complex,$ which is also an algebra and whose norm is \textit{submultiplicative}, i.e.,
$$\forall_{f,g \in \algebra}\ \|f\cdot g\| \leqslant \|f\|\cdot \|g\|.$$

\noindent
If $(\algebra,\|\cdot\|)$ is complete (i.e., it is a Banach space), we say that it is a \textit{Banach algebra} (a multitude of examples of Banach algebras is provided in Chapter 6 in \cite{Bobrowski}). Given two Banach algebras $\algebra_1$ and $\algebra_2$ we say that a map $\Psi:\algebra_1\longrightarrow \algebra_2$ is a \textit{Banach algebra homomorphism} if it is 
\begin{itemize}
	\item continuous,
	\item $\complex-$linear, and
	\item \textit{multiplicative}, i.e.,
$$\forall_{f,g\in\algebra_1}\ \Psi(f\cdot g) = \Psi(f)\cdot \Psi(g).$$
\end{itemize}

A \textit{structure space} $\Delta(\algebra)$ of a Banach algebra $\algebra$ is the set of all nonzero Banach algebra homomorphisms $m:\algebra \longrightarrow \complex.$\footnote{For a thorough exposition of this concept see \cite{DeitmarEchterhoff}, p. 43 or \cite{Folland}, p. 5 or \cite{Kaniuth}, p. 46.} Although many authors require the structure space to be defined solely for \textit{commutative} Banach algebras, we intentionally stick to the general case, i.e., not necessarily commutative. The price we pay for taking such an approach is that $\Delta(\algebra)$ can be an empty set -- this happens, for instance, if $\algebra$ is the matrix algebra $M_{n\times n}(\complex).$\footnote{In a unital Banach algebra $\algebra$ every $m\in\Delta(\algebra)$ determines a maximal ideal $\ker(m)$ (see Theorem 1.12 in \cite{Folland}, p. 6 or Theorem 2.1.8 in \cite{Kaniuth}, p. 49). Furthermore, if $\ker(m) = \{0\}$ then $\algebra = \complex.$ Consequently, since Proposition 1.4 in \cite{Grillet}, p. 360 states that the matrix algebra $M_{n\times n}(\complex),\ n > 1$ is simple (it has no nonzero, proper ideals), we conclude that $\Delta(M_{n\times n}(\complex)) = \emptyset.$} 

The set $\Delta(\algebra)$ becomes a locally compact space\footnote{Every topological space that appears in the paper is assumed to be Hausdorff so we refrain from writing that explicitly.} when endowed with the weak* topology\footnote{For a detailed exposition of the weak* topology $\tau^*$ see Chapter 3.4 in \cite{Brezis}, p. 62 or Chapter 2.4 in \cite{Pedersen}, p. 62. The fact that $\Delta(\algebra)$ is a locally compact space (under $\tau^*$) can be found as Theorem 2.4.5 in \cite{DeitmarEchterhoff}, p. 46 or Theorem 2.2.3 in \cite{Kaniuth}, p. 52. The former theorem also establishes the existence of the Gelfand transform -- for another point of view see Theorem 1.13 in \cite{Folland}, p. 7.} and its raison d'\^etre lies in the fact that (if $\Delta(\algebra)\neq \emptyset$) there exists a norm-decreasing, algebra homomorphism $\Gamma: \algebra \longrightarrow C_0(\Delta(\algebra))$ given by the formula 
$$\forall_{m\in\Delta(\algebra)}\ \Gamma(f)(m) := m(f).$$

\noindent
The function $\Gamma(f)\in C_0(\Delta(\algebra))$ is commonly referred to as the \textit{Gelfand transform} of the element $f\in\algebra$ and the notation ``$\Gamma(f)$'' is usually reduced to $\widehat{f}.$  

In order to reinforce our intuition regarding the abstract theory above let us focus on a particular instance of a commutative Banach algebra. Let $G$ be a locally compact abelian group and let $\mu$ be its \textit{Haar measure},\footnote{See \cite{Haar} for the original paper by Haar as well as \cite{Cartan, DiestelSpalsbury} or \cite{Weil} (chapter 2) for a further study of the subject.} i.e., a nonzero, Borel measure which is
\begin{itemize}
	\item finite on compact sets,
	\item \textit{inner regular}, i.e., for every open set $U$ in $G$ we have
	$$\mu(U) = \sup \{\mu(K)\ :\ K\subset U,\ K - \text{compact}\},$$
	
	\item \textit{outer regular}, i.e., for every Borel set $A$ in $G$ we have
	$$\mu(A) = \inf \{\mu(U)\ :\ A\subset U,\ U - \text{open}\},$$
	
	\item \textit{translation-invariant}, i.e., for every $x\in G$ and every Borel set $A$ we have $\mu(x+A) = \mu(A).$ 
\end{itemize} 

\noindent
For every two functions $f,g\in L^1(G)$ we define their \textit{convolution} $f\star g$ with the formula 
\begin{gather}
\forall_{x\in G}\ f\star g(x) := \int_G\ f(y)g(x-y)\ dy,
\label{convolution}
\end{gather}

\noindent
where the integration is with respect to the Haar measure $\mu$.\footnote{Formally, we probably should write ``$d\mu(y)$'' when integrating with respect to the Haar measure, yet we feel that this is an unnecessary complication of the nomenclature. Hence, we stick to an abbreviated form ``$dy$'' for the sake of simplicity.} This operation turns $L^1(G)$ into a commutative Banach algebra (obviously $L^1(G)$ is already a Banach space, so $\star$ defines the ``mutliplication'' of the elements). 

It is one of the gems of abstract harmonic analysis that the structure space $\Delta(L^1(G),\star)$ is homeomorphic\footnote{See Theorem 3.2.1 in \cite{DeitmarEchterhoff}, p. 67 or Theorem 2.7.2 in \cite{Kaniuth}, p. 89.} to the dual group $\widehat{G},$ i.e., the group of all nonzero, multiplicative homomorphisms (called \textit{characters}) $\chi: G \longrightarrow S^1.$ The homeomorphism $\hh:\Delta(L^1(G),\star) \longrightarrow \widehat{G}$ in question assigns a unique $\chi\in\widehat{G}$ to every multiplicative linear functional $m\in\Delta(L^1(G),\star)$ in such a way that 
$$\forall_{f\in L^1(G)}\ m(f) = \int_G\ f(x)\overline{\chi(x)}\ dx.$$

\noindent
As a consequence of $\Delta(L^1(G),\star)$ and $\widehat{G}$ being homeomorphic, the Banach algebra $C_0(\Delta(L^1(G),\star))$ is isometrically isomorphic with $C_0(\widehat{G})$.\footnote{See Corollary 2.2.13 in \cite{Kaniuth}, p. 57 for a more general result stating that Banach algebras $C_0(X)$ and $C_0(Y)$ are isometrically isomorphic if and only if locally compact Hausdorff spaces $X$ and $Y$ are homeomorphic.} It follows that we may treat the Gelfand transform $\widehat{f}\in C_0(\Delta(L^1(G),\star))$ of $f\in L^1(G)$ as an element of $C_0(\widehat{G})$ given by the formula
$$\forall_{f\in L^1(G)}\ \widehat{f}(\chi) = \int_G\ f(x)\overline{\chi(x)}\ dx.$$

\noindent
To put it in other words, the Gelfand transform of $f\in L^1(G)$ is the Fourier transform $\widehat{f} \in C_0(\widehat{G}).$

\subsection{Context}

The aim of the current subsection is to provide the context for the upcoming sections by giving a brief overview of the present state of the literature. We strive to substantiate the claim that the cosine transform is not only an invaluable tool in the mathematical toolbox (especially for those working in the field of harmonic analysis), but its usage stretches outside of the mathematical realm, to fields such as engineering and technology. 

Although the concept of the cosine transform goes as far back as the monumental work ``\textit{The analytical theory of heat}'' by Jean Baptiste Fourier,\footnote{For an english translation of Fourier's book with commentary by Alexander Freeman see \cite{Fourier}.} it is widely agreed that the ``modern history'' of the operator starts roughly 50 years ago. As Nasir Ahmed recollects,\footnote{See \cite{Ahmeddiary}.} in late '60s and early '70s ``there was a great deal of research activity related to digital orthogonal transforms'' and a large number of transforms were introduced ``with claims of better performance relative to others transforms''. Inspired by the Karhunen-Loeve transform, Ahmed came up with the idea of the cosine transform and issued a proposal to the National Science Foundation to study the newly-invented operator. His proposal was dismissed due to the idea being ``too simple'', but Ahmed continued to work on the concept with his Ph.D. student T. Natarajan and a colleague at the University of Texas, Dr. K. R. Rao. The team was quickly surprised how well the cosine transform performed relative to other transforms and published their results in 1974.\footnote{See \cite{AhmedNatarajanRao}.} 17 years later, Ahmed wrote ``Little did we realize at that time that the resulting DCT\footnote{DCT stands for ``discrete cosine transform''.} would be widely used in the future! It is indeed gratifying to see that the DCT is now essentially a standard in the area of image data compression via transform coding techniques''.

A fleeting glimpse at the literature reveals that Ahmed's claim is not unsubstantiated. The cosine transform has found applications in:
\begin{itemize}
	\item algorithmics and computational complexity theory (\cite{AkopiaAstolaSaarinenTakala, ArguelloZapata, ChenFralickSmith, GuoShiWang, Hou, Lee, Li, MillerTseng, NussbaumerVetterli, PlonkaTasche, Wang}),
	\item data compression (see \cite{Hoffman}),
	\item image compression and JPEG format (see \cite{AguiAraiNakajima, AhmedMagotraMandyam, Wallace}),
	\item signal processing (see \cite{JohnsonShao}).
\end{itemize}

\noindent
We believe that such broad interest in the subject of cosine transform is not coincidental and reflects the fact that the topic is still within the scope of active mathematical research.

\subsection{Layout of the paper}

In this final part of the introduction we summarize the structure of the paper. The goal of this quick outline is to facilitate the comprehension of the ``big picture'' before delving into technical subtleties. 

Section \ref{section:cosineconvolution} begins with the definition of the cosine convolution and what follows is an investigation of its basic features like the d'Alembert property (see Theorem \ref{dAlembertproperty}). We proceed with establishing a bijection $\beta_G$ between the structure space $\Delta(L^1(G), \star_c)$ and the cosine class $\CC(G)$ (see Theorem \ref{cosinebijection}). 

Section \ref{section:structurespacesandcosineclasses} adds a topological layer to our analysis -- by topologizing the set $\CC(G)$ with the topology of uniform convergence on compacta we discover that $\beta_G$ is in fact an open map. The section goes on to prove that $\CC(G)$ is homeomorphic to
\begin{itemize}
	\item $\reals_+\cup\{0\}$ if $G = \reals,$
	\item $S^1_+\cup\{1\}$ if $G = \integers,$ where 
	$$S^1_+ := \bigg\{z \in S^1\ :\ \text{Im}(z) > 0\bigg\},$$
	\item $\naturals_0$ if $G = S^1,$
	\item $\integers_{\lceil \frac{n+1}{2}\rceil}$ if $G = \integers_n$ ($x\mapsto\lceil x\rceil$ is the ceiling function).
\end{itemize}

The focal point of Section \ref{section:cosinestructurespaces} is the computation of particular instances of (what we call) \textit{cosine structure spaces}. It turns out (see Theorems \ref{deltal1reals}, \ref{deltaell1Z}, \ref{deltal1realsslashintegers} and \ref{deltaell1}) that if $G = \reals, \integers, S^1$ or $\integers_n$ then $\beta_G : \Delta(L^1(G),\star_c) \longrightarrow \CC(G)$ is continuous (and thus a homeomorphism in view of what we said earlier). The section concludes with bringing all the pieces of the puzzle as we witness the Gelfand transform manifest itself in the form of the cosine transforms.  

Our final remarks take the form of an Epilogue, in which we indicate a possible direction of future research. Naturally, the paper concludes with a bibliography.

\section{Cosine convolution}
\label{section:cosineconvolution}

We begin our story with the definition of the \textit{cosine convolution} operator $\star_c:L^1(G)\times L^1(G)\longrightarrow \complex$ with the formula:
\begin{gather}
\forall_{x\in G}\ f\star_c g(x) := \int_G\ f(y)\cdot \frac{g(x+y) + g(x-y)}{2}\ dy.
\label{cosineconvolution}
\end{gather}

\noindent
The question that immediately springs to mind when looking at formula \eqref{cosineconvolution} is whether the cosine convolution is well-defined? In other words, given two functions $f,g \in L^1(G)$ does $f\star_c g(x)$ make sense for at least some values $x\in G$? The answer is (luckily) affirmative and in fact, $f\star_c g$ defines a multiplication on $L^1(G)$ turning it into a Banach algebra:  

\begin{thm}
If $f,g\in L^1(G)$ then $f\star_c g(x)$ is well-defined (i.e., finite for almost every $x \in G$) and we have 
$$\|f\star_c g\|_1 \leqslant \|f\|_1\|g\|_1.$$

\noindent
Consequently, $(L^1(G),\star_c)$ is a Banach algebra. 
\label{cosineBanachalgebra}
\end{thm}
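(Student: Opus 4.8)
The plan is to bootstrap off the ordinary convolution $\star$, which the introduction already recognises as turning $L^1(G)$ into a (commutative) Banach algebra. The key preliminary observation is that averaging the two translates in \eqref{cosineconvolution} splits the cosine convolution as
$$f\star_c g \;=\; \tfrac12\,(f\star g)\;+\;\tfrac12\,(f\,\widetilde\star\, g), \qquad f\,\widetilde\star\, g(x):=\int_G f(y)\,g(x+y)\,dy,$$
the second summand being the \emph{anticonvolution}. Substituting $y\mapsto -y$ and invoking the inversion-invariance of the Haar measure (a standard property for abelian $G$) turns the anticonvolution into an ordinary convolution, $f\,\widetilde\star\, g=\check f\star g$ with $\check f(y):=f(-y)$ and $\|\check f\|_1=\|f\|_1$. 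Thus, up to the factor $\tfrac12$, the object $f\star_c g$ is a sum of two honest convolutions.

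From here the analytic content follows quickly. Since $f,\check f,g\in L^1(G)$, both $f\star g$ and $\check f\star g$ are defined for almost every $x$ and lie in $L^1(G)$ with $L^1$-norm at most $\|f\|_1\|g\|_1$; the triangle inequality then gives $\|f\star_c g\|_1\leqslant\tfrac12\|f\|_1\|g\|_1+\tfrac12\|\check f\|_1\|g\|_1=\|f\|_1\|g\|_1$, and well-definedness of $f\star_c g(x)$ for a.e.\ $x$ is inherited from the two convolutions. Should one prefer a self-contained argument, the same bound drops out of Tonelli's theorem applied to $(x,y)\mapsto f(y)\tfrac{g(x+y)+g(x-y)}{2}$, the inner integrals $\int_G|g(x\pm y)|\,dx=\|g\|_1$ being evaluated by translation-invariance.

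It remains to upgrade ``bounded bilinear map'' to ``Banach algebra''. Bilinearity is immediate from linearity of the integral and completeness is inherited from $L^1(G)$, so the only axiom requiring thought is associativity. Here I would record the compact identity $f\star_c g=f^+\star g$, where $f^+:=\tfrac12(f+\check f)$ is the even part of $f$; combined with the elementary fact that the reflection $h\mapsto\check h$ commutes with convolution (so that $f^+$, being even, is fixed by it), one verifies $(f\star_c g)^+=f^+\star g^+$. Consequently both $(f\star_c g)\star_c h$ and $f\star_c(g\star_c h)$ reduce to the single triple convolution $f^+\star g^+\star h$, and associativity of $\star_c$ descends from that of $\star$.

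I expect the genuine obstacle to be not algebraic but measure-theoretic: the interchange of the order of integration presupposes joint measurability of $(x,y)\mapsto g(x\pm y)$ on $G\times G$. For the concrete groups $\reals,\integers,S^1,\integers_n$ studied in the sequel this is automatic, and in the general statement it is precisely the delicate point already resolved in the classical theory of $\star$ --- which is exactly why reducing $\star_c$ to ordinary convolutions, rather than re-deriving everything by hand, is the safe path.
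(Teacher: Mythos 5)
\noindent
Your proposal is correct, and its core decomposition coincides with the paper's: the paper likewise views $\star_c$ as the arithmetic mean $\tfrac{1}{2}(\star+\star_a)$ of the convolution and the anticonvolution, omits the proof altogether, and refers to Deitmar's treatment of $\star$, remarking that it adapts verbatim to $\star_a$ and that the theorem then ``becomes trivial''. The genuine difference is your handling of associativity, and it is a difference in your favour. Averaging two well-defined, submultiplicative products immediately yields bilinearity, a.e.\ finiteness and the bound $\|f\star_c g\|_1\leqslant\|f\|_1\|g\|_1$, but it does \emph{not} yield associativity of the average; indeed the anticonvolution alone is not associative: with $\check{u}(y):=u(-y)$ one has $f\star_a g=\check{f}\star g$, hence $(f\star_a g)\star_a h=(f\star\check{g})\star h$ whereas $f\star_a(g\star_a h)=(\check{f}\star\check{g})\star h$, and these differ in general. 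So the paper's sketch glosses over precisely the axiom you isolate as ``the only one requiring thought''. Your identity $f\star_c g=f^{+}\star g$ with $f^{+}:=\tfrac{1}{2}(f+\check{f})$, combined with $(f\star_c g)^{+}=f^{+}\star g^{+}$ (which follows since the reflection of $u\star v$ is $\check{u}\star\check{v}$ and $f^{+}$ is even), reduces both $(f\star_c g)\star_c h$ and $f\star_c(g\star_c h)$ to $(f^{+}\star g^{+})\star h$, settling associativity by that of $\star$. The same reflection trick $f\star_a g=\check{f}\star g$ also spares you from re-running Deitmar's measure-theoretic argument for $\star_a$: the norm estimate, the a.e.\ well-definedness, and the joint-measurability concerns you raise at the end are all inherited from the classical theory of $\star$. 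In short: same decomposition as the paper, but your write-up closes a gap the paper leaves open.
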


We have taken the liberty of omitting the proof of this result as it is almost a verbatim rewrite of the proof of Theorem 1.6.2 in \cite{Deitmar}, p. 26 where Deitmar shows (with painstaking precision) that $L^1(G)$ is a Banach algebra under the standard convolution $\star$ defined by \eqref{convolution}. Whoever shall read Deitmar's reasoning will surely discover that his proof works equally well for the \textit{anticonvolution}:
\begin{gather*}
\forall_{x\in G}\ f\star_a g(x) := \int_G\ f(y)g(x+y)\ dy.
\end{gather*}

\noindent
Thus, if we view the cosine convolution $\star_c$ as an ``arithmetic mean of convolutions'', i.e., $\frac{\star + \star_a}{2}$ then Theorem \ref{cosineBanachalgebra} becomes trivial. There is one more issue, however, that we would like to address at this point. Although $(L^1(G),\star_c)$ is a Banach algebra, it need not be a commutative Banach algebra, as the anticonvolution part need not be commutative:
\begin{equation*}
\begin{split}
\forall_{x\in G}\ f\star_a g(x) &= \int_G\ f(y)g(x+y)\ dy \stackrel{y\mapsto -(y+x)}{=} \int_G\ f(-(y+x))g(-y)\ dy\\
&\stackrel{y\mapsto -y}{=} \int_G\ g(y)f(-x+y)\ dy = g\star_a f(-x).
\end{split}
\end{equation*}

\noindent
In spite of this nagging splinter, we stick to our guns by defining $\Delta(L^1(G),\star_c)$ as the set of all nonzero Banach algebra homomorphisms $m: (L^1(G),\star_c) \longrightarrow \complex.$ 

Amongst all properties of the cosine convolution we focus on the one bearing a close resemblance with the classical d'Alembert functional equation
$$\forall_{x,y\in \reals}\ \phi(x)\phi(y) = \frac{\phi(x+y) + \phi(x-y)}{2},$$

\noindent
whose continuous and bounded (and nonzero) solutions are the functions $x\mapsto \cos(\omega x),\ \omega\in \reals.$\footnote{For completeness we should mention that if we allow for unbounded (yet still continuous) solutions then the family of functions $x\mapsto \cosh(\omega x),\ \omega \in \reals$ also satisfies the classical d'Alembert functional equation. What is more, apart from the zero function and the two families $x\mapsto \cos(\omega x),\ x\mapsto \cosh(\omega x),\ \omega\in\reals$ there are no other solutions.} After all, it is hardly surprising that the cosine convolution should have so much in common with the cosine function itself! Without further ado, here is the d'Alembert property of the cosine convolution:

\begin{thm}
Let $x,y\in G.$ If $g\in L^1(G)$ is an even function, then 
\begin{gather}
L_yg \star_c L_xg = g\star_c \frac{L_{x+y}g + L_{x-y}g}{2},
\label{LxgstarcLyg}
\end{gather}

\noindent
where for every $z\in G$ the operator $L_z:L^1(G)\longrightarrow L^1(G)$ is given by 
\begin{gather*}
\forall_{u\in G}\ L_zf(u) := f(u - z).
\label{shiftoperators}
\end{gather*}
\label{dAlembertproperty}
\end{thm}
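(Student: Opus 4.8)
The plan is to establish \eqref{LxgstarcLyg} by a direct computation: I would expand both sides using the definition \eqref{cosineconvolution} of $\star_c$ together with the shift formula $L_z f(u) = f(u-z)$, evaluate each side at an arbitrary point $u \in G$, and then reconcile the resulting integrands by exploiting the two symmetries of the Haar measure --- translation-invariance, which is built into its definition, and inversion-invariance under $t \mapsto -t$ --- in conjunction with the evenness of $g$. Throughout, the equality is understood almost everywhere and every substitution and interchange is underwritten by the integrability furnished by Theorem \ref{cosineBanachalgebra}.

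First I would treat the left-hand side. Writing $(L_y g)(t) = g(t-y)$ and $(L_x g)(u \pm t) = g(u \pm t - x)$, I obtain
\begin{gather*}
(L_y g \star_c L_x g)(u) = \frac{1}{2}\int_G g(t-y)\big[g(u+t-x) + g(u-t-x)\big]\, dt.
\end{gather*}
The substitution $t \mapsto t + y$, legitimate by translation-invariance of $\mu$, turns this into
\begin{gather*}
\frac{1}{2}\int_G g(t)\big[g(u+t-x+y) + g(u-t-x-y)\big]\, dt,
\end{gather*}
which I will call the \emph{reference form}.

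Next I would expand the right-hand side. Setting $h := \tfrac{1}{2}(L_{x+y}g + L_{x-y}g)$ and unravelling $h(u \pm t)$ produces a sum of four terms,
\begin{gather*}
(g \star_c h)(u) = \frac{1}{4}\int_G g(t)\big[g(u+t-x-y) + g(u+t-x+y) + g(u-t-x-y) + g(u-t-x+y)\big]\, dt.
\end{gather*}
Two of these summands --- namely $g(u+t-x+y)$ and $g(u-t-x-y)$ --- already coincide with the bracket in the reference form. For the remaining pair $g(u+t-x-y) + g(u-t-x+y)$ I would perform the change of variables $t \mapsto -t$; since inversion preserves the Haar measure on a locally compact abelian group (the pushforward of $\mu$ under $t \mapsto -t$ is again a Haar measure, hence coincides with $\mu$ by its essential uniqueness) and $g$ is even, the factor $g(t)$ is left unchanged while the two surplus summands are carried precisely onto the matching pair $g(u-t-x-y) + g(u+t-x+y)$. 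Consequently the bracketed four-term sum equals twice the reference bracket, the prefactor $\tfrac{1}{4}$ is promoted to $\tfrac{1}{2}$, and the right-hand side reproduces the reference form exactly, which is \eqref{LxgstarcLyg}.

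The algebra of relabelling the shifts is harmless; the one genuinely load-bearing observation is that the \emph{surplus} two terms on the right cannot be matched by translation alone --- they require the interplay of the inversion symmetry $t \mapsto -t$ of $\mu$ with the hypothesis that $g$ is even. This is precisely the step I expect to be the crux, and it simultaneously clarifies why the evenness assumption is indispensable.
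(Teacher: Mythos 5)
Your proof is correct and takes essentially the same route as the paper's: a direct expansion of both sides via the definition of $\star_c$, reconciled by the substitutions $t \mapsto t+y$ (translation invariance of the Haar measure) and $t \mapsto -t$ (inversion invariance combined with the evenness of $g$). The paper merely organizes the identical computation differently --- it evaluates the anticonvolution and convolution halves of the left-hand side each in two ways (once by the translation $v \mapsto v+y$, once by the reflected translation $v \mapsto -v+y$) and averages, whereas you normalize the left-hand side to a reference form and match the four terms of the expanded right-hand side --- but the substitutions, the role of evenness, and the substance of the argument coincide.
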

\begin{proof}
First we note that for every $u \in G$ we have
\begin{equation}
\begin{split}
&\int_G\ L_yg(v) L_xg(u+v)\ dv \stackrel{v\mapsto v+y}{=} \int_G\ g(v) L_{x-y}g(u+v)\ dv,\\
&\int_G\ L_yg(v) L_xg(u+v)\ dv \stackrel{v\mapsto -v+y}{=} \int_G\ g(-v) L_{x-y}g(u-v)\ dv = \int_G\ g(v) L_{x-y}g(u-v)\ dv,
\end{split}
\label{eqn1forcosine}
\end{equation}
	
\noindent
and analogously
\begin{equation}
\begin{split}
&\int_G\ L_yg(v) L_xg(u-v)\ dv \stackrel{v\mapsto v+y}{=} \int_G\ g(v) L_{x+y}g(u-v)\ dv,\\
&\int_G\ L_yg(v) L_xg(u-v)\ dv \stackrel{v\mapsto -v+y}{=} \int_G\ g(-v) L_{x+y}g(u+v)\ dv = \int_G\ g(v) L_{x+y}g(u+v)\ dv.
\end{split}
\label{eqn2forcosine}
\end{equation}
	
\noindent
Summation of equations \eqref{eqn1forcosine} (and division by $2$) yields
\begin{gather}
\forall_{u\in G}\ \int_G\ L_yg(v) L_xg(u+v)\ dv = \int_G\ g(v)\cdot \frac{L_{x-y}g(u+v) + L_{x-y}g(u-v)}{2}\ dv = g\star_c L_{x-y}g(u),
\label{eqn3forcosine}
\end{gather}
	
\noindent	
while the summation of equations \eqref{eqn2forcosine} (and again division by $2$) yields
\begin{gather}
\forall_{u\in G}\ \int_G\ L_yg(v) L_xg(u-v)\ dv = \int_G\ g(v)\cdot \frac{L_{x+y}g(u+v) + L_{x+y}g(u-v)}{2}\ dv = g\star_c L_{x+y}g(u).
\label{eqn4forcosine}
\end{gather}

\noindent
Finally, summation of equations \eqref{eqn3forcosine} and \eqref{eqn4forcosine}	(and division by $2$ for the last time) produces the desired d'Alembert property \eqref{LxgstarcLyg}.
\end{proof}

With the d'Alembert property of the cosine convolution in our toolbox we begin to investigate the structure space $\Delta(L^1(G),\star_c).$ Our first goal is to prove that $\Delta(L^1(G),\star_c)$ is in bijective correspondence with the \textit{cosine class} 
\begin{gather*}
\CC(G) := \bigg\{\phi \in C^b(G)\ :\ \phi\neq 0,\ \forall_{x,y\in G}\ \phi(x)\phi(y) = \frac{\phi(x+y) + \phi(x-y)}{2}\bigg\}.
\end{gather*}

\noindent
In order to demonstrate this bijection, we will use the following lemma:

\begin{lemma}
For every $m\in \Delta(L^1(G),\star_c)$ there exists an even function $g_*\in L^1(G)$ such that $m(g_*)\neq 0.$
\label{existenceofevenfunction}
\end{lemma}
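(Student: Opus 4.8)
The plan is to exploit a symmetry hidden in the very definition of $\star_c$: the cosine convolution does not ``see'' the odd part of its first argument. Writing $\widetilde{f}(x) := f(-x)$ for the reflection of $f$, I would first establish the identity
$$\widetilde{f} \star_c g = f \star_c g \qquad \text{for all } f,g \in L^1(G).$$
This follows by substituting $y \mapsto -y$ in the defining integral \eqref{cosineconvolution}, which is legitimate because the Haar measure on the abelian group $G$ is invariant under the reflection $y \mapsto -y$: this reflection is a topological group automorphism, so it transports $\mu$ to a scalar multiple of itself, and iterating it (since it is its own inverse) forces that scalar to equal $1$. Observe that under $y\mapsto -y$ the two summands $g(x+y)$ and $g(x-y)$ simply swap roles, which is exactly why the odd part of $f$ washes out.

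Once this identity is in hand, the rest is essentially formal. Since $m \neq 0$, there exists $g \in L^1(G)$ with $m(g) \neq 0$. Applying the homomorphism $m$ to both sides of the identity and invoking multiplicativity gives $m(\widetilde{f})\, m(g) = m(f)\, m(g)$; dividing by $m(g)$ yields $m(\widetilde{f}) = m(f)$ for every $f \in L^1(G)$. In other words, $m$ is insensitive to reflection.

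It then remains to transfer this to even functions. For arbitrary $f$ I would decompose it into even and odd parts $f = f^+ + f^-$, where $f^{\pm} := \tfrac{1}{2}(f \pm \widetilde{f})$. Linearity of $m$ together with $m(\widetilde{f}) = m(f)$ gives $m(f^+) = m(f)$ (and incidentally $m(f^-) = 0$). Finally, using once more that $m$ is nonzero, I choose $h \in L^1(G)$ with $m(h) \neq 0$ and set $g_* := h^+$; this function is even by construction and satisfies $m(g_*) = m(h) \neq 0$, which is precisely the assertion of the lemma.

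I do not anticipate a serious obstacle here. The only point requiring genuine care is the justification of the reflection-invariance of the Haar measure in the first step, and the implicit appeal to the Fubini--Tonelli theorem guaranteeing that all the manipulated objects are bona fide elements of $L^1(G)$ — the latter already being underwritten by Theorem \ref{cosineBanachalgebra}.
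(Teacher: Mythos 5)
Your proposal is correct and follows essentially the same route as the paper's own proof: both exploit the reflection symmetry of the cosine convolution via the substitution $y\mapsto -y$, apply the multiplicativity of $m$ to conclude that $m$ is invariant under reflection, and then symmetrize a function on which $m$ does not vanish. The only immaterial differences are that you establish the identity $\widetilde{f}\star_c g = f\star_c g$ for arbitrary pairs (the paper only needs the case $g=f$, giving $m(f_*\circ\iota)=m(f_*)$ for the particular $f_*$ with $m(f_*)\neq 0$) and that you take $g_* = \tfrac{1}{2}(h+\widetilde{h})$ rather than $g_* = f_*+f_*\circ\iota$, a harmless rescaling.
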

\begin{proof}
Let $\iota:G\longrightarrow G$ be the \textit{inverse function} $\iota(x) := -x.$ Then
\begin{equation*}
\begin{split}
\forall_{f\in L^1(G)}\ (f\circ\iota) \star_c f(x) &= \int_G\ f\circ\iota(y) \cdot \frac{f(x+y) + f(x-y)}{2}\ dy\\
&\stackrel{y\mapsto -y}{=} \int_G\ f(y) \cdot \frac{f(x-y) + f(x+y)}{2}\ dy = f\star_c f,
\end{split}
\end{equation*}

\noindent
which leads to 
\begin{gather}
\forall_{f\in L^1(G)}\ m(f\circ\iota)m(f) = m((f\circ\iota) \star_c f) = m(f\star_c f) = m(f)^2.
\label{canwedividebymf}
\end{gather}

\noindent
Since $m$ is nonzero by the definition of $\Delta(L^1(G),\star_c),$ then there must exist a function $f_*\in L^1(G)$ such that $m(f_*)\neq 0.$ Consequently, equations \eqref{canwedividebymf} yield $m(f_*\circ\iota) = m(f_*).$ Finally, we put $g_* := f + f\circ\iota$ and observe that 
$$m(g_*) = m(f_*+f_*\circ\iota) = m(f_*) + m(f_*\circ\iota) = 2m(f_*) \neq 0,$$

\noindent
which concludes the proof.  
\end{proof}

We are now in position to prove a bijective correspondence between $\Delta(L^1(G),\star_c)$ and $\CC(G):$

\begin{thm}
If $m_{\phi} : L^1(G) \longrightarrow \complex$ is given by
\begin{gather}
m_{\phi}(f) := \int_G\ f(x)\phi(x)\ dx
\label{formulaformphi}
\end{gather}

\noindent
for some $\phi\in \CC(G),$ then $m_{\phi} \in \Delta(L^1(G),\star_c).$ Furthermore, for every $m \in \Delta(L^1(G),\star_c)$ there exists a unique $\phi\in \CC(G)$ such that $m = m_{\phi}.$
\label{cosinebijection}
\end{thm}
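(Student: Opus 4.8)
The statement splits into two tasks: showing that every $\phi \in \CC(G)$ induces a member $m_\phi$ of $\Delta(L^1(G),\star_c)$, and conversely that every $m \in \Delta(L^1(G),\star_c)$ arises in this way from a \emph{unique} $\phi$. The forward direction is essentially a direct computation, so I would dispose of it first. Boundedness of $\phi$ gives $|m_\phi(f)| \leqslant \|\phi\|_\infty\,\|f\|_1$, so $m_\phi$ is a continuous linear functional. To see it is multiplicative I would expand $m_\phi(f\star_c g)$ using \eqref{cosineconvolution} and \eqref{formulaformphi}, apply Fubini's theorem (legitimate since the double integral converges absolutely), and perform the substitutions $x\mapsto x-y$ and $x\mapsto x+y$ in the two resulting terms so as to transfer the translations off $g$ and onto $\phi$. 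The inner integrand then becomes $g(u)\cdot\frac{\phi(u+y)+\phi(u-y)}{2}$, at which point the d'Alembert identity defining $\CC(G)$ collapses it to $\phi(y)\phi(u)g(u)$, and one reads off $m_\phi(f\star_c g)=m_\phi(f)m_\phi(g)$. Finally $m_\phi\neq 0$, since otherwise $\int_G f\phi=0$ for every $f\in L^1(G)$ would force $\phi=0$, contradicting $\phi\in\CC(G)$.

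For the converse, fix $m\in\Delta(L^1(G),\star_c)$ and, using Lemma \ref{existenceofevenfunction}, pick an even $g_*\in L^1(G)$ with $m(g_*)\neq 0$. The candidate is
\[
\phi(x) := \frac{m(L_x g_*)}{m(g_*)}, \qquad x\in G.
\]
That $\phi\in\CC(G)$ I would check as follows. Applying the homomorphism $m$ to the d'Alembert property of $\star_c$ (Theorem \ref{dAlembertproperty}) with $g=g_*$ and dividing by $m(g_*)^2$ produces exactly $\phi(x)\phi(y)=\frac{\phi(x+y)+\phi(x-y)}{2}$. Continuity of $\phi$ follows since $x\mapsto L_x g_*$ is continuous from $G$ into $L^1(G)$ and $m$ is bounded, while boundedness follows from $|\phi(x)|\leqslant \|m\|\,\|g_*\|_1/|m(g_*)|$ using translation invariance of the Haar measure. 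Evaluating at $x=0$ gives $\phi(0)=1$, so $\phi\neq 0$; and then putting $x=0$ in the d'Alembert identity forces $\phi(-y)=\phi(y)$, so $\phi$ is even.

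It remains to prove $m=m_\phi$, which I regard as the crux. The plan is to compute $m(f\star_c g_*)$ in two ways. Writing $M_x g:=\frac{L_{-x}g+L_x g}{2}$, formula \eqref{cosineconvolution} exhibits $f\star_c g_* = \int_G f(x)\,M_x g_*\,dx$ as a Bochner integral in $L^1(G)$ (the integrand is dominated by $|f(x)|\,\|g_*\|_1$), so the bounded functional $m$ may be moved inside: $m(f\star_c g_*)=\int_G f(x)\,m(M_x g_*)\,dx$. Because $m(L_{-x}g_*)=\phi(-x)m(g_*)=\phi(x)m(g_*)=m(L_x g_*)$ by the evenness of $\phi$, the integrand equals $f(x)\phi(x)m(g_*)$, giving $m(f\star_c g_*)=m(g_*)\,m_\phi(f)$. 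On the other hand, multiplicativity of $m$ gives $m(f\star_c g_*)=m(f)m(g_*)$. Cancelling $m(g_*)\neq 0$ yields $m=m_\phi$. Uniqueness is then immediate: if $m_{\phi_1}=m_{\phi_2}$ then $\int_G f(\phi_1-\phi_2)=0$ for all $f\in L^1(G)$, whence $\phi_1=\phi_2$ almost everywhere, and continuity upgrades this to equality everywhere. The main obstacle I anticipate is precisely this last step -- justifying the passage of $m$ through the vector-valued integral, and the bookkeeping by which the evenness of $\phi$ turns the symmetrized translate $M_x g_*$ back into $L_x g_*$ so that multiplicativity can be brought to bear.
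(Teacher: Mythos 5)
Your proposal is correct and follows essentially the same route as the paper's proof: the same Fubini-plus-substitution computation for multiplicativity, the same candidate $\phi(x) = m(L_xg_*)/m(g_*)$ built from Lemma \ref{existenceofevenfunction} and Theorem \ref{dAlembertproperty}, the same passage of $m$ through the Bochner integral representing $f\star_c g_*$, and the same $L^1$--$L^\infty$ duality idea for uniqueness (which the paper phrases via an explicit test function). The only differences are cosmetic -- you compute $m(f\star_c g_*)$ in two ways rather than starting from $\int_G f\phi$, and you explicitly verify $m_\phi \neq 0$, a small point the paper leaves tacit.
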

\begin{proof}
Obviously, if $m_{\phi}$ is given by formula (\ref{formulaformphi}) then it is a linear functional, which satisfies
$$\forall_{f\in L^1(G)}\ |m_{\phi}(f)| \leqslant \|\phi\|_{\infty}\|f\|_1,$$

\noindent
where $\|\cdot\|_{\infty}$ stands for the supremum norm. The inequality above means that $m_{\phi}$ is a bounded (and thus continuous) functional on $L^1(G)$. Thus, in order to conclude that $m_{\phi}\in \Delta(L^1(G),\star_c)$ we simply need to demonstrate the muliplicativity of $m_{\phi}.$ We have 
\begin{equation*}
\begin{split}
\forall_{f,g\in L^1(G)}\ m_{\phi}(f\star_c g) &= \int_G\ f\star_c g(x)\phi(x)\ dx \\
&= \int_G \bigg(\int_G\ f(y)\cdot \frac{g(x+y) + g(x-y)}{2}\ dy\bigg)\phi(x)\ dx \\
&=  \int_G \int_G\ f(y)\cdot \frac{g(x+y) + g(x-y)}{2}\cdot \phi(x)\ dxdy \\
&= \frac{1}{2} \int_G \int_G\ f(y)g(x+y)\phi(x)\ dxdy + \frac{1}{2}\int_G \int_G\ f(y) g(x-y) \phi(x)\ dxdy.
\end{split}
\end{equation*}

\noindent
Applying the substition $x\mapsto x-y$ to the first double integral and the substitution $x\mapsto x+y$ to the second double integral in the last line we obtain
\begin{equation*}
\begin{split}
\forall_{f,g\in L^1(G)}\ m_{\phi}(f\star_c g) &= \int_G\int_G\ f(y)g(x)\cdot \frac{\phi(x-y) + \phi(x+y)}{2}\ dxdy\\
&\stackrel{\phi\in \CC(G)}{=} \int_G\int_G\ f(y)g(x)\phi(x)\phi(y)\ dxdy = m_{\phi}(f)m_{\phi}(g).
\end{split}
\end{equation*}

\noindent
This implies that $m_{\phi} \in \Delta(L^1(G), \star_c)$ and concludes the first part of the proof.

For the second part of the proof, we fix an element $m\in \Delta(L^1(G),\star_c)$ and seek to show that there exists a unique $\phi\in\CC(G)$ such that $m = m_{\phi}.$ By Lemma \ref{existenceofevenfunction} there exists an even function $g_* \in L^1(G)$ such that $m(g_*)\neq 0.$ Moreover, by Theorem \ref{dAlembertproperty} we know that 
$$\forall_{x,y\in G}\ L_yg_*\star_c L_xg_* = g_*\star_c \frac{L_{x+y}g_* + L_{x-y}g_*}{2}.$$

\noindent
Applying the functional $m$ to this equation and using its linearity and multiplicativity we obtain
\begin{gather}
\forall_{x,y\in G}\ m(L_yg_*) m(L_xg_*) = m(g_*)\cdot \frac{m(L_{x+y}g_*) + m(L_{x-y}g_*)}{2}.
\label{funceqnforcos}
\end{gather}

\noindent
Next, we define the function $\phi : G\longrightarrow \complex$ by the formula
\begin{gather}
\forall_{x\in G}\ \phi(x) := \frac{m(L_xg_*)}{m(g_*)},
\label{definitionofphi}
\end{gather}

\noindent
which is 
\begin{itemize}
	\item nonzero, because $\phi(0) = 1,$
	\item continuous by Lemma 1.4.2 in \cite{DeitmarEchterhoff}, p. 18, and
	\item bounded, because $\|L_xg_*\|_1 = \|g_*\|_1$ for every $x \in G.$
\end{itemize}

\noindent
Dividing equation (\ref{funceqnforcos}) by $m(g_*)^2$ we may rewrite it in the form 
$$\forall_{x,y\in G}\ \phi(y) \phi(x) =  \frac{\phi(x+y) + \phi(x-y)}{2},$$

\noindent
which means that $\phi\in \CC(G).$ We have
\begin{equation*}
\begin{split}
\forall_{f\in L^1(G)}\ \int_G\ f(x)\phi(x)\ dx &\stackrel{\phi \in \CC(G)}{=} \int_G\ f(x)\cdot \frac{\phi(x) + \phi(-x)}{2}\ dx\\
&\stackrel{\eqref{definitionofphi}}{=} \int_G\ f(x)\cdot \frac{m(L_xg_*) + m(L_{-x}g_*)}{2m(g_*)}\ dx \\
&= \frac{1}{m(g_*)}\cdot m\left(\int_G\ f(x)\cdot \frac{L_xg_* + L_{-x}g_*}{2}\ dx \right) \\
&= \frac{1}{m(g_*)}\cdot m(f\star_c g_*) = \frac{1}{m(g_*)}\cdot m(f)m(g_*) = m(f),
\end{split}
\end{equation*}

\noindent
where the third equality holds true due to Lemma 11.45 in \cite{AliprantisBorder}, p. 427 (or Proposition 7 in \cite{Dinculeanu}, p. 123). We have thus proved that $m = m_{\phi}.$ 

Finally, the fact that \eqref{definitionofphi} is a unique $\phi$ such that $m=m_{\phi}$ follows from a technique, which is well-known in the field of variational calculus: suppose that there exist $\phi_1,\phi_2 \in \CC(G)$ such that $m = m_{\phi_1} = m_{\phi_2}.$ Consequently, we have 
\begin{gather}
\forall_{f\in L^1(G)}\ \int_G\ f(x)\bigg(\phi_1(x) - \phi_2(x)\bigg)\ dx = 0.
\label{duBoiszero}
\end{gather}

\noindent
Assuming the existence of an element $x_*\in G$ such that $\phi_1(x_*)\neq \phi_2(x_*),$ we can choose an open neighbourhood $U_*$ of $x_*$ such that 
\begin{itemize}
	\item $\phi_1 - \phi_2$ is of constant sign on $U_*,$ and
	\item $\overline{U_*}$ is compact (because we constantly work under the assumption that the group $G$ is locally compact). 
\end{itemize}

\noindent
It remains to observe that for $f_* := (\phi_1 - \phi_2)\mathds{1}_{\overline{U_*}}$ we have
$$0 \stackrel{\eqref{duBoiszero}}{=} \int_G\ f_*(x)\bigg(\phi_1(x) - \phi_2(x)\bigg)\ dx = \int_{\overline{U_*}}\ \bigg(\phi_1(x) - \phi_2(x)\bigg)^2\ dx > 0,$$

\noindent
which is a contradiction. This means that $\phi_1 = \phi_2$ and concludes the proof.
\end{proof}

\section{Structure spaces and cosine classes}
\label{section:structurespacesandcosineclasses}

To summarize the climactic point of the previous section, Theorem \ref{cosinebijection} establishes that the function $\beta_G: \Delta(L^1(G),\star_c) \longrightarrow \CC(G)$ given by $\beta_G(m) := \phi$ (where $\phi$ is a unique element of $\CC(G)$ such that $m=m_{\phi}$) is a bijection. Our next task is to prove that $\beta_G$ is in fact more than just a bijection of two sets. Before we discuss the topology that we impose on the cosine class $\CC(G)$ let us examine its elements a little closer:

\begin{thm}
For every $\phi\in\CC(G)$ there exists a character $\chi_{\phi}\in\widehat{G}$ such that 
\begin{gather}
\forall_{x\in G}\ \phi(x) = \frac{\chi_{\phi}(x) + \overline{\chi_{\phi}(x)}}{2}.
\label{formofphi}
\end{gather}

\noindent
In particular, $\|\phi\|_{\infty} \leqslant 1.$
\label{cosineclassanddual}
\end{thm}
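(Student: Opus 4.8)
The plan is to recognize that the defining condition of $\CC(G)$ is exactly the (abstract) d'Alembert functional equation, and to solve it by hand, exhibiting the character as $\chi_\phi = \phi + i\sigma$ for a suitable real "sine" partner $\sigma$. First I would record two elementary normalizations obtained from the identity $\phi(x)\phi(y) = \frac{\phi(x+y)+\phi(x-y)}{2}$: putting $x=y=0$ forces $\phi(0)\in\{0,1\}$, and since $\phi\neq 0$ the value $\phi(0)=0$ would make $\phi$ vanish identically (set $y=0$), so $\phi(0)=1$; setting $x=0$ then gives $\phi(-y)=\phi(y)$, i.e. $\phi$ is even. This is also the point at which boundedness enters and delivers the ``in particular'' clause: fixing $x_0\in G$ and writing $a_n := \phi(nx_0)$, the identity with $x=nx_0,\ y=x_0$ yields the recurrence $a_{n+1} = 2\phi(x_0)a_n - a_{n-1}$, whose characteristic roots multiply to $1$. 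Since $\phi\in C^b(G)$ the sequence $(a_n)$ is bounded, which forces both roots onto the unit circle; writing them as $e^{\pm i\theta}$ gives $\phi(x_0)=\cos\theta\in[-1,1]$. As $x_0$ was arbitrary, $\phi$ is real-valued and $\|\phi\|_\infty\leqslant 1$.

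The algebraic engine of the construction is the quadratic identity
\[ \big(\phi(x+y) - \phi(x)\phi(y)\big)^2 = \big(1 - \phi(x)^2\big)\big(1 - \phi(y)^2\big), \]
which I would derive purely formally from d'Alembert: expand $[\phi(x+y)-\phi(x)\phi(y)][\phi(x-y)-\phi(x)\phi(y)]$, collapse the cross terms using $\phi(x+y)+\phi(x-y)=2\phi(x)\phi(y)$, evaluate $\phi(x+y)\phi(x-y)=\frac{\phi(2x)+\phi(2y)}{2}$ (again the defining identity), and substitute $\phi(2x)=2\phi(x)^2-1$. Because $|\phi|\leqslant 1$, the right-hand side is nonnegative, so the real function $s:=\sqrt{1-\phi^2}$ is well-defined and continuous.

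With $s$ in hand I would build the character. If $\phi^2\equiv 1$ then $\phi$ is itself a continuous $\{-1,1\}$-valued homomorphism, i.e. a real character, and the claim holds with $\chi_\phi=\phi$. Otherwise I would fix an anchor $y_0$ with $\phi(y_0)^2\neq 1$ and set
\[ \sigma(x) := \frac{\phi(x+y_0) - \phi(x)\phi(y_0)}{s(y_0)}, \qquad \chi_\phi := \phi + i\sigma. \]
The quadratic identity gives $\sigma^2 = 1-\phi^2$ directly, whence $|\chi_\phi|^2 = \phi^2+\sigma^2\equiv 1$ and $\chi_\phi$ is continuous into $S^1$; once it is shown to be multiplicative it is a character and $\frac{\chi_\phi+\overline{\chi_\phi}}{2}=\mathrm{Re}(\chi_\phi)=\phi$, as required. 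The main obstacle is precisely this multiplicativity, which unwinds to the sine addition formula $\sigma(x+y)=\phi(x)\sigma(y)+\sigma(x)\phi(y)$ together with $\sigma(x)\sigma(y)=\phi(x)\phi(y)-\phi(x+y)$. This is the classical technical heart of d'Alembert's equation on abelian groups: the sign of the square root $\sqrt{1-\phi^2}$ must be pinned down coherently across all of $G$, and anchoring $\sigma$ at $y_0$ is exactly the device that enforces that coherence. I would expect to lean heavily on the abelian structure of $G$ to translate and interchange the variables in the functional equation and push the verification of the addition formula through; every other step is routine.
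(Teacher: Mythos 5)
Everything you actually execute is correct: the normalizations $\phi(0)=1$ and evenness, the recurrence argument (a nice, self-contained derivation of realness and $\|\phi\|_{\infty}\leqslant 1$, different from the paper's), the quadratic identity, the degenerate case $\phi^2\equiv 1$, and the reduction of the theorem to the two formulas
\begin{equation*}
\sigma(x+y)=\phi(x)\sigma(y)+\sigma(x)\phi(y),\qquad \sigma(x)\sigma(y)=\phi(x)\phi(y)-\phi(x+y).
\end{equation*}
But your proof stops exactly where the theorem lives. These two identities are what make $\chi_\phi=\phi+i\sigma$ a character, and you leave them as an expectation (``I would expect to lean heavily on the abelian structure \dots and push the verification through''). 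For comparison, the paper does not solve the functional equation at all: it invokes Corollary 1 of \cite{Kannappan}, which supplies a continuous homomorphism $\chi_{\phi}:G\longrightarrow\complex^*$ with $\phi=\frac{\chi_{\phi}+\chi_{\phi}^{-1}}{2}$, and then spends its effort on the one remaining point, namely that boundedness of $\phi$ forces $|\chi_{\phi}|\equiv 1$, so that $\chi_{\phi}\in\widehat{G}$ and $\chi_{\phi}^{-1}=\overline{\chi_{\phi}}$. What you have left unproven is precisely the content of that citation, so as written the proposal is an incomplete (though genuinely independent) proof.

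The good news is that your anchoring device does work and closing the gap is a finite computation, not a missing idea. The key simplification is that, by d'Alembert, $\phi(x+y)-\phi(x)\phi(y)=\frac{1}{2}\bigl(\phi(x+y)-\phi(x-y)\bigr)$. For the sine addition law, expand the numerator of $\sigma(x)\phi(y)+\phi(x)\sigma(y)$, namely $\phi(x+y_0)\phi(y)+\phi(x)\phi(y+y_0)-2\phi(x)\phi(y)\phi(y_0)$, applying d'Alembert to each product: all terms cancel except $\frac{1}{2}\bigl(\phi(x+y+y_0)-\phi(x+y-y_0)\bigr)$, which is exactly $\phi(x+y+y_0)-\phi(x+y)\phi(y_0)$, the numerator of $\sigma(x+y)$. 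For the product formula, the same kind of expansion of $\bigl(\phi(x+y_0)-\phi(x)\phi(y_0)\bigr)\bigl(\phi(y+y_0)-\phi(y)\phi(y_0)\bigr)$, using also $\phi(2y_0)=2\phi(y_0)^2-1$ and evenness plus commutativity to regroup arguments like $x-y+2y_0$, collapses to $-\bigl(1-\phi(y_0)^2\bigr)\bigl(\phi(x+y)-\phi(x)\phi(y)\bigr)$, which after dividing by $s(y_0)^2$ is the required identity. With those two computations written out your argument is complete and fully self-contained -- something the paper's proof is not -- at the price of roughly a page of elementary manipulation in place of a one-line reference.
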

\begin{proof}
By Corollary 1 in \cite{Kannappan} there exists a continuous homomorphism $\chi_{\phi}:G\longrightarrow \complex^*$ such that 
\begin{gather}
\forall_{x\in G}\ \phi(x) = \frac{\chi_{\phi}(x) + \chi_{\phi}(x)^{-1}}{2}.
\label{formofphiinverse}
\end{gather}

\noindent
Furthermore, since $\phi$ is bounded, then so is $\chi_{\phi}.$ We fix $y_*\in G$ and observe that the muliplicative property 
$$\forall_{x\in G}\ \chi_{\phi}(x) \chi_{\phi}(y_*) = \chi_{\phi}(x+y_*)$$

\noindent
implies 
$$\sup_{x\in G}\ |\chi_{\phi}(x)| |\chi_{\phi}(y_*)| = \sup_{x\in G}\ |\chi_{\phi}(x+y_*)| = \sup_{z\in G}\ |\chi_{\phi}(z)|.$$

\noindent
By the finiteness of $\sup_{x\in G}\ |\chi_{\phi}(x)| = \sup_{z\in G}\ |\chi_{\phi}(z)|$ we obtain $|\chi_{\phi}(y_*)| = 1.$ Since the element $y_*$ was chosen arbitrarily, then we have established that $\chi_{\phi}$ is in fact a character, i.e., $\chi_{\phi} \in \widehat{G}.$ Formula \eqref{formofphi} follows from \eqref{formofphiinverse} and the fact that $\chi_{\phi}(x)^{-1} = \overline{\chi_{\phi}(x)}$ for every $x\in G$. 
\end{proof}

Knowing what the elements of $\CC(G)$ look like, we intend to show that $\beta_G$ is an open map if the cosine class is endowed with the proper topology.\footnote{As we have already mentioned in the introductory section, $\Delta(L^1(G),\star_c)$ is endowed with the weak* topology $\tau^*$, which makes it a locally compact space.} What do we mean by ``proper''? Well, $\CC(G)$ is a subspace of $C^b(G)$ so it is natural to endow it with the topology of uniform convergence on compacta $\tau_{ucc}$.\footnote{For a detailed discussion on the properties of $\tau_{ucc}$ see Chapter 7 in \cite{Kelley}, Chapter 46 in \cite{Munkres} or Chapter 43 in \cite{Willard}.} Our next result confirms that this is the right choice:

\begin{thm}
$\beta_G : (\Delta(L^1(G),\star_c), \tau^*)\longrightarrow (\CC(G),\tau_{ucc})$ is an open map. 
\label{betaGisanopenmap}
\end{thm}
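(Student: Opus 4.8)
The plan is to convert the openness statement into a continuity statement about the inverse. By Theorem \ref{cosinebijection} the map $\beta_G$ is a bijection, and for a bijection $\beta_G$ is open precisely when its set-theoretic inverse $\beta_G^{-1}:(\CC(G),\tau_{ucc})\longrightarrow(\Delta(L^1(G),\star_c),\tau^*)$, $\phi\mapsto m_\phi$, is continuous (since $(\beta_G^{-1})^{-1}(U)=\beta_G(U)$ for $U\subseteq\Delta(L^1(G),\star_c)$). So the whole proof reduces to showing that $\phi\mapsto m_\phi$ is continuous. I would verify this through nets, which characterize continuity in arbitrary topological spaces: I must show that whenever $\phi_\alpha\longrightarrow\phi$ in $\tau_{ucc}$, one has $m_{\phi_\alpha}\longrightarrow m_\phi$ in $\tau^*$. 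Recalling that the weak* topology is exactly the topology of pointwise convergence on $L^1(G)$, this amounts to proving that $\int_G f(x)\phi_\alpha(x)\,dx\longrightarrow\int_G f(x)\phi(x)\,dx$ for every fixed $f\in L^1(G)$.

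The engine of the estimate will be the uniform bound $\|\phi\|_\infty\leqslant 1$ valid for all elements of $\CC(G)$, which is furnished by Theorem \ref{cosineclassanddual}; in particular $|\phi_\alpha-\phi|\leqslant 2$ everywhere. I would then run a standard $\tfrac{\eps}{3}$-argument, using the density of $C_c(G)$ in $L^1(G)$ to localise the integral onto a compact set. Fixing $\eps>0$ and $f\in L^1(G)$, choose $h\in C_c(G)$ with $\|f-h\|_1<\tfrac{\eps}{3}$, and let $K:=\operatorname{supp}(h)$. Writing
$$\int_G f(\phi_\alpha-\phi)=\int_G (f-h)\phi_\alpha+\int_K h(\phi_\alpha-\phi)+\int_G (h-f)\phi,$$
the first and third terms are each bounded by $\|f-h\|_1\cdot\|\phi_\alpha\|_\infty\leqslant\|f-h\|_1<\tfrac{\eps}{3}$ (and likewise for $\phi$), using $\|\phi_\alpha\|_\infty,\|\phi\|_\infty\leqslant1$. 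For the middle term, $\tau_{ucc}$-convergence gives $\sup_K|\phi_\alpha-\phi|\longrightarrow 0$, so for all $\alpha$ beyond some index the middle term is at most $\|h\|_1\cdot\sup_K|\phi_\alpha-\phi|<\tfrac{\eps}{3}$. Summing yields $\big|\int_G f(\phi_\alpha-\phi)\big|<\eps$ eventually, which is precisely $m_{\phi_\alpha}(f)\to m_\phi(f)$.

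The main obstacle I anticipate is controlling the ``tail'' of the integral away from a compact set: uniform convergence on compacta says nothing about behaviour of $\phi_\alpha-\phi$ outside $K$, so a naive estimate over all of $G$ fails. The twin facts that save the argument are, first, the uniform boundedness $\|\phi\|_\infty\leqslant1$ from Theorem \ref{cosineclassanddual} (without which the tail is uncontrolled), and second, the density of compactly supported functions in $L^1(G)$, which lets me replace $f$ by a function genuinely supported on a compact $K$ while paying only a small $L^1$-penalty. These two ingredients together neutralise the tail and reduce everything to plain uniform convergence on $K$. I would close by remarking that net-continuity of $\beta_G^{-1}$ establishes its continuity, and hence the openness of $\beta_G$, completing the proof.
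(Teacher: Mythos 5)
Your proof is correct, and it reaches the conclusion by a route that is formally dual to the paper's. You convert openness of the bijection $\beta_G$ into continuity of the inverse $\beta_G^{-1}:\phi\mapsto m_{\phi}$ and verify the latter on nets, using density of $C_c(G)$ in $L^1(G)$ to split $\int_G f(\phi_\alpha-\phi)\,dx$ into three terms. The paper instead verifies openness directly: it takes a basic weak* open set $U$ determined by finitely many $f_1,\dots,f_N\in L^1(G)$ and $\eps>0$, fixes $\phi_{**}\in\beta_G(U)$, and exhibits an explicit $\tau_{ucc}$-neighbourhood $V$ of $\phi_{**}$ (uniform closeness on a compact $K$) with $V\subset\beta_G(U)$. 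The analytic core is identical in both arguments: the uniform bound $\|\phi\|_{\infty}\leqslant 1$ of Theorem \ref{cosineclassanddual} controls the tail of the integral off a compact set, and uniform convergence on that compact set handles the rest; your replacement of $f$ by a compactly supported approximant $h$ plays exactly the role of the paper's choice of $K$ with $\int_{G\backslash K}|f_n(x)|\,dx$ small. What your formulation buys: it is more conceptual, it handles one $f\in L^1(G)$ at a time (no simultaneous bookkeeping over $N$ functions and no $\delta$-juggling), and your insistence on nets rather than sequences is exactly right, since $(\CC(G),\tau_{ucc})$ need not be first countable when $G$ is not $\sigma$-compact. What the paper's formulation buys: it stays entirely at the level of basic open sets and produces the required open neighbourhoods explicitly, without invoking the net characterization of continuity or the density of $C_c(G)$ in $L^1(G)$.
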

\begin{proof}
Our task is to prove that the image (under $\beta_G$) of an arbitrary set 
\begin{gather*}
U := \bigg\{m\in \Delta(L^1(G),\star_c) \ :\ \forall_{n=1,\ldots,N}\ |m(f_n) - m_{\phi_*}(f_n)| < \eps\bigg\},
\label{weakstaropenset}
\end{gather*}

\noindent
where $\eps>0,\ m_{\phi_*} \in \Delta(L^1(G),\star_c)$ and $(f_n)_{n=1}^N\subset L^1(G)$ are fixed, is $\tau_{ucc}-$open. We fix $\phi_{**}\in \beta_G(U),$ which means that there exists $\delta\in(0,1)$ such that 
\begin{gather}
\forall_{n=1,\ldots,N}\ |m_{\phi_{**}}(f_n) - m_{\phi_*}(f_n)| < \delta\eps.
\label{deltaeps}
\end{gather}

\noindent
Furthermore, we pick $K$ to be a compact subset of $G$ such that 
\begin{gather}
\forall_{n=1,\ldots,N}\ \int_{G\backslash K}\ |f_n(x)|\ dx \leqslant \frac{(1-\delta)\eps}{4}.
\label{choiceofK}
\end{gather}

\noindent
Last but not least, we put
\begin{gather*}
V := \bigg\{\phi\in \CC(G)\ :\ \sup_{x\in K}\ |\phi(x) - \phi_{**}(x)| < \frac{(1-\delta)\eps}{2\max_{n=1,\ldots,N}\ \|f_n\|_1}\bigg\},
\label{Vopenintucc}
\end{gather*}

\noindent
which is a $\tau_{ucc}-$open neighbourhood of $\phi_{**}.$ Finally, we calculate that 
\begin{equation*}
\begin{split}
\forall_{\phi\in V}\ \forall_{n=1,\ldots,N}\ |m_{\phi}(f_n) - m_{\phi_*}(f_n)| &\leqslant |m_{\phi}(f_n) - m_{\phi_{**}}(f_n)| + |m_{\phi_{**}}(f_n) - m_{\phi_*}(f_n)|\\
&\stackrel{\eqref{deltaeps}}{\leqslant} |m_{\phi}(f_n) - m_{\phi_{**}}(f_n)| + \delta\eps \\
&\leqslant \int_K\ |f_n(x)| |\phi(x) - \phi_{**}(x)|\ dx + \int_{G\backslash K}\ |f_n(x)| |\phi(x) - \phi_{**}(x)|\ dx + \delta\eps\\
&\stackrel{\text{Lemma }\ref{cosineclassanddual}}{\leqslant} \int_K\ |f_n(x)| |\phi(x) - \phi_{**}(x)|\ dx + 2 \int_{G\backslash K}\ |f_n(x)|\ dx + \delta\eps\\
&\stackrel{\eqref{choiceofK}}{\leqslant} \int_K\ |f_n(x)| |\phi(x) - \phi_{**}(x)|\ dx + \frac{(1+\delta)\eps}{2}\\
&\stackrel{\phi\in V}{<} \frac{(1-\delta)\eps}{2\max_{n=1,\ldots,N}\ \|f_n\|_1}\cdot \int_K\ |f_n(x)| \ dx + \frac{(1+\delta)\eps}{2} \leqslant \eps.
\end{split}
\end{equation*}

\noindent
We conclude that $V$ is a $\tau_{ucc}-$open neighbourhood of (an arbitrarily chosen) $\phi_{**}$ and $V \subset \beta_G(U).$ Thus $\beta_G$ is an open map.
\end{proof}

The question of continuity of $\beta_G$ is more subtle. In fact, we do not know whether this function is continuous for an arbitrary locally compact abelian group $G$, but it turns out to be true for very important particular cases. We will come back to this issue in the next section. However, before we do that we wish to investigate the cosine classes a little further. 

Our goal is to ``compute''\footnote{By ``computing'' the cosine class $\CC(G)$ we mean ``finding a (relatively simple) topological space $T$ which is homeomorphic to $\CC(G)$''.} the cosine classes $\CC(G)$ if $G = \reals,\integers,S^1$ and $\integers_n.$ We will refer to these families as the \textit{canonical cosine classes} since the four groups $\reals,\integers,S^1$ and $\integers_n$ play a fundamental role in commutative harmonic analysis. 

It is well-known in the literature\footnote{See Proposition 7.1.6 in \cite{Deitmar}, p. 106 or Theorem 4.5 in \cite{Folland}, p. 89.} that
\begin{equation*}
\begin{split}
\widehat{\reals} &= \bigg\{x\mapsto e^{2\pi iyx} \ :\ y\in\reals\bigg\}, \\
\widehat{\integers} &= \bigg\{k\mapsto z^k\ :\ z \in S^1\bigg\},\\
\widehat{S^1} &= \bigg\{x \mapsto e^{2\pi ikx} \ :\ k\in\integers\bigg\},\\
\widehat{\integers_n} &= \bigg\{k\mapsto e^{\frac{2\pi ilk}{n}} \ :\ l\in\integers_n\bigg\},\\
\end{split}
\end{equation*}
 
\noindent
so by Theorem \ref{cosineclassanddual} we have
\begin{equation*}
\begin{split}
\CC(\reals) &= \bigg\{x\mapsto \frac{e^{2\pi iyx} + e^{-2\pi iyx}}{2} = \cos(2\pi yx) \ :\ y\in\reals_+\cup\{0\}\bigg\},\\
\CC(\integers) &= \bigg\{k\mapsto \frac{z^k + z^{-k}}{2}\ :\ z\in S^1_+\cup\{1\}\bigg\},\ \text{ where }\ S^1_+ := \bigg\{z \in S^1\ :\ \text{Im}(z) > 0\bigg\},\\
\CC(S^1) &= \bigg\{x\mapsto \frac{e^{2\pi ikx} + e^{-2\pi ikx}}{2} = \cos(2\pi kx) \ :\ k\in\naturals_0\bigg\},\\
\CC(\integers_n) &= \bigg\{k\mapsto \frac{e^{\frac{2\pi ilk}{n}} + e^{-\frac{2\pi ilk}{n}}}{2} = \cos\left(\frac{2\pi lk}{n}\right) \ :\ l\in\integers_{\lceil\frac{n+1}{2}\rceil}\bigg\}.
\end{split}
\end{equation*}

\noindent
We go on to prove that $(\CC(\reals),\tau_{ucc})$ is homeomorphic to $\reals_+\cup\{0\}$ but first we need the following technical lemma:

\begin{lemma}
If $y_* \in \reals_+\cup\{0\}$ and $(y_n)\subset \reals_+\cup\{0\}$ is an unbounded sequence, then for every $\eps>0$ the sequence
$$n\mapsto \sup_{x\in [0,\eps]}\ |\cos(2\pi y_nx) - \cos(2\pi y_*x)|$$

\noindent
does not converge to zero.
\label{lemmaonnonconvergence}
\end{lemma}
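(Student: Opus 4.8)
The plan is to exploit the rapid oscillation of $x \mapsto \cos(2\pi y_n x)$ for large $y_n$: on any fixed interval $[0,\eps]$ such a function will attain the value $-1$ at some point very close to the origin, whereas $\cos(2\pi y_* x)$ is fixed and continuous with value $1$ at the origin. Matching these two facts at the same point will force the supremum to be close to $2$, hence far from $0$.

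Concretely, since $(y_n)\subset\reals_+\cup\{0\}$ is unbounded, I would first pass to a subsequence $(y_{n_k})$ with $y_{n_k}\to +\infty$. For every $k$ large enough that $y_{n_k} > \tfrac{1}{2\eps}$, I set $x_k := \tfrac{1}{2 y_{n_k}}$, so that $x_k\in(0,\eps]$ is an admissible point for the supremum. The key computation is then immediate: on the one hand $\cos(2\pi y_{n_k} x_k) = \cos(\pi) = -1$, while on the other hand $\cos(2\pi y_* x_k) = \cos\!\left(\tfrac{\pi y_*}{y_{n_k}}\right) \to \cos(0) = 1$ as $k\to\infty$, because $y_*$ is fixed and $y_{n_k}\to\infty$. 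Evaluating the supremum at $x_k$ therefore yields
$$
\sup_{x\in[0,\eps]}\ |\cos(2\pi y_{n_k} x) - \cos(2\pi y_* x)| \ \ge\ \left| -1 - \cos\!\left(\tfrac{\pi y_*}{y_{n_k}}\right)\right| \ \xrightarrow[k\to\infty]{}\ 2.
$$

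From this lower bound I would conclude the lemma: a subsequence of $n\mapsto \sup_{x\in[0,\eps]}|\cos(2\pi y_n x)-\cos(2\pi y_* x)|$ is bounded away from $0$ (in fact it converges to $2$), so the full sequence cannot converge to $0$. I do not anticipate a genuine obstacle here; the only points demanding a little care are ensuring $x_k\in[0,\eps]$, which is exactly why I restrict to indices with $y_{n_k} > \tfrac{1}{2\eps}$, and checking that the edge case $y_* = 0$ causes no trouble (there $\cos(2\pi y_* x)\equiv 1$ and the lower bound equals $2$ exactly).
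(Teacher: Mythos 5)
Your proof is correct and rests on the same idea as the paper's: for large $y_n$ the function $x\mapsto\cos(2\pi y_n x)$ completes a half-oscillation so close to the origin that $\cos(2\pi y_* x)$ is still near $1$ there, forcing the supremum over $[0,\eps]$ to stay large along a subsequence. The only real difference is in execution: where the paper fixes a window $[0,\frac{1}{8y_*}]$, splits into the cases $y_*=0$ and $y_*\neq 0$, and argues via the period that the value $-1$ is attained inside that window, you evaluate both functions at the explicit point $x_k = \frac{1}{2y_{n_k}}$, which handles both cases uniformly and yields the asymptotically sharp lower bound $2$ instead of the paper's $1+\frac{\sqrt{2}}{2}$.
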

\begin{proof}
If $y_* = 0$ then for any $y_n\geqslant 1$ the function $x\mapsto \cos(2\pi y_nx)$ has period $T_n = \frac{1}{y_n} \leqslant 1$ so 
$$\forall_{y_n\geqslant 1}\ \sup_{x\in [0,1]}\ |\cos(2\pi y_nx) - 1| \geqslant 2,$$

\noindent
which ends the proof. Therefore, suppose that $y_* \neq 0$ and put $K:= [0,\frac{1}{8y_*}].$ Then
$$\forall_{x\in K\cap [0,\eps]}\ \cos(2\pi y_* x) \geqslant \frac{\sqrt{2}}{2}$$

\noindent
whereas for any $y_n \geqslant \max\left(4y_*,\frac{1}{2\eps}\right)$ the function $x\mapsto \cos(2\pi y_n x)$ has period $T_n = \frac{1}{y_n}\leqslant \min(\frac{1}{4y_*},2\eps)$ and thus attains the value $-1$ on $K\cap[0,\eps]$. Consequently, we have 
$$\forall_{y_n\geqslant \max\left(4y_*,\frac{1}{2\eps}\right)}\ \sup_{x\in K\cap[0,\eps]}\ |\cos(2\pi y_nx) - \cos(2\pi y_*x)| \geqslant 1 + \frac{\sqrt{2}}{2},$$

\noindent
which concludes the proof. 
\end{proof}

\begin{thm}
The function $\hh_{\reals} : (\CC(\reals),\tau_{ucc}) \longrightarrow \reals_+\cup\{0\}$ given by the formula 
$$\hh_{\reals}(x\mapsto \cos(2\pi yx)) := y$$

\noindent
is a homeomorphism.
\label{hhrealsishomeo}
\end{thm}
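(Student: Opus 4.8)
The plan is to verify that $\hh_{\reals}$ is a continuous bijection whose inverse is continuous; since $\reals$ is $\sigma$-compact, the topology $\tau_{ucc}$ on $\CC(\reals)$ is metrizable, so throughout I may argue with sequences. That $\hh_{\reals}$ is a bijection is almost immediate from the explicit description $\CC(\reals) = \{x\mapsto \cos(2\pi yx) : y \in \reals_+\cup\{0\}\}$ recorded above: surjectivity is built into this formula, and injectivity (equivalently, well-definedness of $\hh_{\reals}$) follows because $\cos(2\pi y_1 x) = \cos(2\pi y_2 x)$ for all $x\in\reals$ forces, upon differentiating twice at $x=0$, the equality $(2\pi y_1)^2 = (2\pi y_2)^2$, whence $y_1 = y_2$ for $y_1,y_2\geqslant 0$.

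The continuity of the inverse $\hh_{\reals}^{-1}: y\mapsto (x\mapsto \cos(2\pi yx))$ is the easy direction. If $y_n \to y_*$ in $\reals_+\cup\{0\}$, then for any compact $K\subset\reals$, say $K\subset[-M,M]$, the elementary Lipschitz estimate $|\cos(2\pi y_n x) - \cos(2\pi y_* x)| \leqslant 2\pi |y_n - y_*|\,|x| \leqslant 2\pi M |y_n - y_*|$ shows that $\cos(2\pi y_n \cdot) \to \cos(2\pi y_* \cdot)$ uniformly on $K$; hence $\hh_{\reals}^{-1}$ is $\tau_{ucc}$-continuous.

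The crux is the continuity of $\hh_{\reals}$ itself: I must show that $\phi_n := \cos(2\pi y_n\cdot) \to \phi_* := \cos(2\pi y_*\cdot)$ in $\tau_{ucc}$ implies $y_n \to y_*$. I will argue by contradiction, assuming $y_n\not\to y_*$ and passing to a subsequence with $|y_{n_k} - y_*|$ bounded below. First I rule out that $(y_{n_k})$ is unbounded: an unbounded subsequence would, by Lemma \ref{lemmaonnonconvergence} (applied with $\eps=1$), make $\sup_{x\in[0,1]}|\cos(2\pi y_{n_k}x) - \cos(2\pi y_*x)|$ fail to converge to zero, contradicting $\tau_{ucc}$-convergence on the compact set $[0,1]$. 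Hence $(y_{n_k})$ is bounded, and by Bolzano--Weierstrass it has a further subsequence converging to some $y_{**}$ with $y_{**}\neq y_*$. By the Lipschitz estimate of the previous paragraph, $\cos(2\pi y_{**}\cdot)$ is the $\tau_{ucc}$-limit of this sub-subsequence; but the same sub-subsequence converges to $\phi_* = \cos(2\pi y_*\cdot)$, so $\cos(2\pi y_{**}x) = \cos(2\pi y_*x)$ for all $x$, forcing $y_{**} = y_*$ by the injectivity established above -- a contradiction. Therefore $y_n \to y_*$ and $\hh_{\reals}$ is continuous. The main obstacle is precisely this last direction, where Lemma \ref{lemmaonnonconvergence} does the decisive work of excluding runaway frequencies; the bounded case is then dispatched by a routine compactness-plus-injectivity argument.
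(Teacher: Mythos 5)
Your proof is correct and follows essentially the same route as the paper's: the same Lipschitz estimate handles the continuity of $\hh_{\reals}^{-1}$, and the hard direction rests on Lemma \ref{lemmaonnonconvergence} to force boundedness of the frequencies, followed by Bolzano--Weierstrass, uniqueness of $\tau_{ucc}$-limits, and injectivity. The only cosmetic differences are your justification for reducing to sequences (metrizability of $\tau_{ucc}$ via $\sigma$-compactness of $\reals$, versus the paper's second-countability/sequentiality citations) and your contradiction framing of the subsequence argument, which the paper instead phrases as ``every subsequence has a further subsequence converging to $y_*$''.
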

\begin{proof}
It is easy to check that $\hh_{\reals}$ is a bijection, so we focus on the topological properties of this function. By Proposition 1.2 in \cite{Hu}, p. 152 the space $(\CC(\reals), \tau_{ucc})$ is second-countable and thus, by Theorem 1.6.14 in \cite{Engelking}, p. 53 it is sequential. This means that for $\hh_{\reals}$ to be a homeomorphism it is necessary and sufficient that $\hh_{\reals}^{-1}$ satisfies
$$y_n\longrightarrow y_* \ \Longleftrightarrow \ \hh_{\reals}^{-1}(y_n) \longrightarrow_{\tau_{ucc}} \hh_{\reals}^{-1}(y_*).$$

First, suppose that $(y_n)\subset \reals_+\cup\{0\}$ is a sequence convergent to $y_*.$ Then 
$$\forall_{x\in\reals}\ |\cos(2\pi y_n x) - \cos(2\pi y_* x)| = 2\pi|x| \left|\int_{y_*}^{y_n}\ \sin(2\pi yx)\ dy\right| \leqslant 2\pi |x| |y_n - y_*|,$$

\noindent
so for every compact $K\subset \reals$ we have
$$\sup_{x\in K}\ |\cos(2\pi y_n x) - \cos(2\pi y_* x)| \longrightarrow 0$$

\noindent
as $n\rightarrow \infty.$ Consequently, $\hh_{\reals}^{-1}(y_n) \longrightarrow_{\tau_{ucc}} \hh_{\reals}^{-1}(y_*)$ as desired. 

For the reverse implication (i.e., ``$\Longleftarrow$'') we suppose that $(y_n)\subset \reals_+\cup\{0\}$ is a sequence such that $\hh_{\reals}^{-1}(y_n) \longrightarrow_{\tau_{ucc}} \hh_{\reals}^{-1}(y_*)$ for some $y_* \in \reals_+\cup\{0\}.$ By Lemma \ref{lemmaonnonconvergence} the sequence $(y_n)$ is necessarily bounded, so using the Bolzano-Weierstrass theorem there exists a convergent subsequence $(y_{n_k}).$ If $y_{**}$ denotes the limit of this subsequence, then by the first part of the reasoning we have $\hh_{\reals}^{-1}(y_{n_k}) \longrightarrow_{\tau_{ucc}} \hh_{\reals}^{-1}(y_{**}).$ Since $\tau_{ucc}$ is a Hausdorff topology then it follows that $\hh_{\reals}^{-1}(y_{**}) = \hh_{\reals}^{-1}(y_*),$ which in turn implies the equality $y_* = y_{**}.$ Since the reasoning works for an arbitary choice of the subsequence we have $y_n \longrightarrow y_*,$ which concludes the proof.
\end{proof}

Let us prove a corresponding result for the cosine class $\CC(\integers):$
\begin{thm}
The function $\hh_{\integers} : (\CC(\integers),\tau_{ucc}) \longrightarrow S^1_+\cup\{1\}$ given by the formula 
$$\hh_{\integers}\left(k \mapsto \frac{z^k + z^{-k}}{2}\right) := z$$

\noindent
is a homeomorphism.
\label{hhintegersishomeo}
\end{thm}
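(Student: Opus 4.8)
The plan is to follow the blueprint of Theorem \ref{hhrealsishomeo} in spirit, while exploiting one crucial simplification: since the group $\integers$ carries the discrete topology, its compact subsets are precisely the finite sets, and hence $\tau_{ucc}$ on $\CC(\integers)$ coincides with the topology of pointwise convergence. As in the real case, $(\CC(\integers),\tau_{ucc})$ is second-countable (indeed it is a subspace of $\complex^{\integers}$ with the product topology, hence metrizable) and therefore sequential, so it suffices to check that for $(z_n)\subset S^1_+\cup\{1\}$ and $z_*\in S^1_+\cup\{1\}$ one has $z_n\to z_*$ if and only if $\hh_{\integers}^{-1}(z_n)\to_{\tau_{ucc}}\hh_{\integers}^{-1}(z_*)$. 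That $\hh_{\integers}$ is a bijection follows from the description of $\CC(\integers)$ recorded above (surjectivity) together with the injectivity of $\text{Re}$ on $S^1_+\cup\{1\}$ (well-definedness and injectivity), as I explain below.

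For the implication $\Longrightarrow$ I would simply invoke continuity of $z\mapsto \tfrac{z^k+z^{-k}}{2}$ on $S^1$ for each fixed $k$: if $z_n\to z_*$ then $\tfrac{z_n^k+z_n^{-k}}{2}\to\tfrac{z_*^k+z_*^{-k}}{2}$ for every $k\in\integers$, which is precisely pointwise convergence and hence, the compacta being finite, $\tau_{ucc}$-convergence. Unlike the real case, no uniform estimate and no analogue of Lemma \ref{lemmaonnonconvergence} is required, because the parameter set $S^1_+\cup\{1\}$ already sits inside the compact circle $S^1$, so there is no ``escape to infinity'' to rule out.

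For the reverse implication $\Longleftarrow$ the key move is to evaluate the assumed pointwise convergence at the single index $k=1$. Since $z^{-1}=\overline z$ on $S^1$, we have $\tfrac{z^{1}+z^{-1}}{2}=\text{Re}(z)$, so $\hh_{\integers}^{-1}(z_n)\to_{\tau_{ucc}}\hh_{\integers}^{-1}(z_*)$ forces $\text{Re}(z_n)\to\text{Re}(z_*)$. It then remains to observe that the restriction of $\text{Re}$ to the half-open arc $S^1_+\cup\{1\}=\{e^{i\theta}:\theta\in[0,\pi)\}$ is a homeomorphism onto $(-1,1]$ (its inverse is $r\mapsto r+i\sqrt{1-r^2}$, equivalently $\theta=\arccos r$); this same injectivity is what pins down the bijection in the first paragraph. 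Because $z_*\in S^1_+\cup\{1\}$ guarantees $\text{Re}(z_*)\in(-1,1]$, the convergence $\text{Re}(z_n)\to\text{Re}(z_*)$ in $(-1,1]$ transports back through this homeomorphism to $z_n\to z_*$.

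The only genuinely delicate point — what I would flag as the ``main obstacle'' — is precisely that $S^1_+\cup\{1\}$ is \emph{not} closed in $S^1$ (the point $-1$ is missing), so one must ensure that a $\tau_{ucc}$-convergent sequence of cosine elements cannot have its parameters accumulate at the forbidden endpoint $-1$. Reading off the real part at $k=1$ dispatches this instantly, since $-1$ has real part $-1\notin(-1,1]$ while the target limit's real part lies strictly above $-1$; this is exactly why the single evaluation at $k=1$ does all the work and replaces the boundedness/Bolzano--Weierstrass machinery of the real case. Should one prefer to mirror the real proof literally, one could instead extract a subsequence $z_{n_k}\to w$ in the compact set $S^1$, use the $\Longrightarrow$ direction together with the Hausdorffness of $\tau_{ucc}$ to conclude $\text{Re}(w)=\text{Re}(z_*)$ and $\text{Im}(w)\geqslant 0$, hence $w=z_*$, and finally deduce $z_n\to z_*$ from the fact that every subsequence admits a sub-subsequence converging to $z_*$.
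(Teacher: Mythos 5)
Your proof is correct, and its decisive step takes a genuinely different route from the paper's. The preliminary reduction (compacta in $\integers$ are finite, so $\tau_{ucc}$ is the topology of pointwise convergence; the space is sequential, so sequences suffice) and the implication ``$\Longrightarrow$'' coincide with the paper's, which merely makes the continuity quantitative via the bound $\left|\frac{z_n^k+z_n^{-k}}{2}-\frac{z_*^k+z_*^{-k}}{2}\right|\leqslant |k|\,|z_n-z_*|$. The difference lies in ``$\Longleftarrow$'': the paper copies the template of Theorem \ref{hhrealsishomeo} verbatim, writing $z_n=e^{2\pi i\alpha_n}$ with $\alpha_n\in[0,\frac{1}{2})$, extracting a convergent subsequence of $(\alpha_n)$ by Bolzano--Weierstrass, and concluding via Hausdorffness of $\tau_{ucc}$ --- which is precisely the fallback argument you sketch in your closing sentences --- whereas your main argument evaluates the assumed convergence at the single index $k=1$, recognizes $\phi(1)=\text{Re}(z)$, and pulls the convergence $\text{Re}(z_n)\to\text{Re}(z_*)$ back through the homeomorphism $\text{Re}:S^1_+\cup\{1\}\longrightarrow(-1,1]$. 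Your route buys two things. First, it is shorter and avoids all compactness machinery: no subsequences and, as you note, no analogue of Lemma \ref{lemmaonnonconvergence}. Second, and more importantly, it correctly disposes of the endpoint issue you flag as the main obstacle: in the paper's argument the subsequential limit $\alpha_{**}$ lies a priori only in the \emph{closed} interval $[0,\frac{1}{2}]$, so $z_{**}$ may equal $-1$, a point at which $\hh_{\integers}^{-1}$ is not defined by the stated parametrization of $\CC(\integers)$ (and this case is not vacuous, since $k\mapsto(-1)^k$ does satisfy the d'Alembert equation); the paper passes over this silently, and repairing it requires exactly your observation that the real part at $k=1$ of the limit exceeds $-1$. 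So your evaluation-at-$k=1$ argument is not merely a valid alternative but in fact plugs a small gap in the printed proof.
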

\begin{proof}
Arguing as in Theorem \ref{hhrealsishomeo} it is enough to prove that 
$$z_n\longrightarrow z_* \ \Longleftrightarrow \ \hh_{\integers}^{-1}(z_n) \longrightarrow_{\tau_{ucc}} \hh_{\integers}^{-1}(z_*).$$

First, suppose that $(z_n)\subset S^1_+\cup\{1\}$ is a sequence convergent to $z_*.$ Then 
$$\forall_{k\in\integers}\ \left|\frac{z_n^k + z_n^{-k}}{2} - \frac{z_*^k+z_*^{-k}}{2}\right| \leqslant \frac{|z_n^k - z_*^k| + |z_n^{-k} - z_*^{-k}|}{2} \leqslant k\cdot \frac{|z_n - z_*| + |z_n^{-1} - z_*^{-1}|}{2} \stackrel{|z_n| = |z_*| = 1}{=} k |z_n - z_*|,$$

\noindent
so for every compact (i.e., finite) $K\subset \integers$ we have
$$\sup_{k\in K}\ \left|\frac{z_n^k + z_n^{-k}}{2} - \frac{z_*^k + z_*^{-k}}{2}\right| \longrightarrow 0$$

\noindent
as $n\rightarrow \infty.$ Consequently, $\hh_{\integers}^{-1}(z_n) \longrightarrow_{\tau_{ucc}} \hh_{\integers}^{-1}(z_*)$ as desired. 

For the reverse implication (i.e., ``$\Longleftarrow$'') we suppose that $(z_n)\subset S^1_+\cup\{1\}$ is a sequence such that $\hh_{\integers}^{-1}(z_n) \longrightarrow_{\tau_{ucc}} \hh_{\integers}^{-1}(z_*)$ for some $z_* = e^{2\pi i\alpha_*} \in S^1_+\cup\{1\}.$ Moreover, let $(\alpha_n)\subset [0,\frac{1}{2})$ be such that $z_n = e^{2\pi i\alpha_n}.$ Since the sequence $(\alpha_n)$ is bounded, then using the Bolzano-Weierstrass theorem there exists a convergent subsequence $(\alpha_{n_k}).$ If $\alpha_{**}$ denotes the limit of this subsequence, then $z_{n_k} = e^{2\pi i \alpha_{n_k}}\longrightarrow z_{**} = e^{2\pi i \alpha_{**}}$ and by the first part of the reasoning we have $\hh_{\integers}^{-1}(z_{n_k}) \longrightarrow_{\tau_{ucc}} \hh_{\integers}^{-1}(z_{**}).$ We conclude the proof just as in Theorem \ref{hhrealsishomeo}.
\end{proof}

In a similar vein (with even simpler proofs) one can prove analogous results for the remaining two cosine classes:
\begin{thm}
The functions 
\begin{equation*}
\begin{split}
\hh_{S^1} &: (\CC(S^1),\tau_{ucc}) \longrightarrow \naturals_0, \hspace{0.4cm} \hh_{S^1}(x\mapsto \cos(2\pi kx)) := k,\\
\hh_{\integers_n} &: (\CC(\integers_n),\tau_{ucc}) \longrightarrow \integers_{\lceil\frac{n+1}{2}\rceil}, \hspace{0.4cm} \hh_{\integers_n}\left(k\mapsto \cos\left(\frac{2\pi lk}{n}\right)\right) := l,\\
\end{split}
\end{equation*}

\noindent
are homeomorphisms.
\end{thm}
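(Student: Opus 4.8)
The plan is to exploit the fact that both $S^1$ and $\integers_n$ are \emph{compact} groups, which collapses the topology $\tau_{ucc}$ onto something far more manageable. When $G$ is compact the only compact set worth testing against is $G$ itself, so on $\CC(G)\subset C^b(G)=C(G)$ the topology $\tau_{ucc}$ is simply the topology induced by the supremum norm. Since the codomains $\naturals_0$ and $\integers_{\lceil\frac{n+1}{2}\rceil}$ both carry the discrete topology (the former as a subspace of $\reals$, the latter being finite), and since $\hh_{S^1}$ and $\hh_{\integers_n}$ are bijections by the explicit parametrizations of $\CC(S^1)$ and $\CC(\integers_n)$ recorded above, it would suffice to prove that each cosine class is a \emph{discrete} subspace of $C(G)$. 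A bijection between two discrete spaces is automatically a homeomorphism, so this alone closes the argument and explains why no Bolzano--Weierstrass bookkeeping (as in Theorems \ref{hhrealsishomeo} and \ref{hhintegersishomeo}) is needed here.

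For $G=S^1$ I would establish discreteness by showing that distinct elements of $\CC(S^1)$ are uniformly bounded away from one another. Taking Haar measure on $S^1$ to be normalized Lebesgue measure, the functions $x\mapsto\cos(2\pi kx)$, $k\in\naturals_0$, are pairwise orthogonal in $L^2(S^1)$ and each has squared norm at least $\tfrac12$, so for $k\neq k'$ the squared $L^2$-distance between the two is at least $\tfrac12+\tfrac12=1$. Combining this with the elementary bound $\|\cdot\|_{\infty}\geqslant\|\cdot\|_{L^2}$ valid on the probability space $S^1$ yields
$$\left\|\cos(2\pi k\,\cdot)-\cos(2\pi k'\,\cdot)\right\|_{\infty}\geqslant\left\|\cos(2\pi k\,\cdot)-\cos(2\pi k'\,\cdot)\right\|_{L^2(S^1)}\geqslant 1\qquad(k\neq k').$$
Hence every point of $\CC(S^1)$ is isolated, because its $\tau_{ucc}$-ball of radius $\tfrac12$ contains no other member; consequently $\CC(S^1)$ is discrete and $\hh_{S^1}$ is a homeomorphism.

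For $G=\integers_n$ the reasoning is even shorter, since $\CC(\integers_n)$ is a \emph{finite} set of distinct functions living in the finite-dimensional space $C(\integers_n)\cong\complex^n$; a finite subset of a Hausdorff (indeed metrizable) space is always discrete, so $\CC(\integers_n)$ carries the discrete topology under $\tau_{ucc}$. As $\integers_{\lceil\frac{n+1}{2}\rceil}$ is likewise finite and discrete, the bijection $\hh_{\integers_n}$ is a homeomorphism with no estimate required at all. The only genuine computation in the whole proof is therefore the uniform separation bound for $\CC(S^1)$, and I expect this to be the main (albeit modest) obstacle; once the orthogonality relations of the cosine system are invoked it becomes immediate, confirming the author's claim that these results admit ``even simpler proofs''.
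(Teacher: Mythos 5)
Your proof is correct, but it follows a genuinely different route from the one the paper intends. The paper offers no written proof here; its phrase ``in a similar vein (with even simpler proofs)'' points to replicating the scheme of Theorems \ref{hhrealsishomeo} and \ref{hhintegersishomeo}: note that $(\CC(G),\tau_{ucc})$ is second-countable, hence sequential, then characterize convergent sequences in both directions, using an analogue of Lemma \ref{lemmaonnonconvergence} together with a Bolzano--Weierstrass extraction (which for integer parameters degenerates to the remark that a convergent sequence in $\naturals_0$ is eventually constant). You bypass that machinery entirely: since $S^1$ and $\integers_n$ are compact, $\tau_{ucc}$ coincides with the sup-norm topology, and you show the cosine classes themselves are \emph{discrete} --- for $\CC(S^1)$ via the separation bound
$\left\|\cos(2\pi k\,\cdot)-\cos(2\pi k'\,\cdot)\right\|_{\infty}\geqslant\left\|\cos(2\pi k\,\cdot)-\cos(2\pi k'\,\cdot)\right\|_{L^2(S^1)}\geqslant 1$ for $k\neq k'$, and for $\CC(\integers_n)$ by finiteness --- so that the given bijections onto the discrete spaces $\naturals_0$ and $\integers_{\lceil\frac{n+1}{2}\rceil}$ are automatically homeomorphisms. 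All steps check out: the $L^2$ estimate follows correctly from pairwise orthogonality and $\|\cos(2\pi k\,\cdot)\|_{L^2}^2\geqslant\frac{1}{2}$ on the probability space $S^1$, the inequality $\|\cdot\|_{L^2}\leqslant\|\cdot\|_{\infty}$ is valid there, and a finite subset of a Hausdorff space is indeed discrete in the subspace topology. What your approach buys is a structural explanation of why these two cases are ``even simpler'': the parameter spaces are discrete, so no subsequence bookkeeping can possibly be needed, and your bound moreover shows $\CC(S^1)$ is uniformly separated (hence closed) in $C(S^1)$. What the paper's template buys is uniformity: the same sequential argument handles all four groups $\reals,\integers,S^1,\integers_n$, including the noncompact ones, where $\CC(G)$ is emphatically not discrete and your method would not apply.
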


\section{Canonical cosine structure spaces and transforms}
\label{section:cosinestructurespaces}

In the previous section we have investigated the canonical cosine classes and found relatively simple spaces to which they are homeomorphic. We have also shown (see Theorem \ref{betaGisanopenmap}) that $\beta_G : \Delta(L^1(G),\tau^*)\longrightarrow (\CC(G),\tau_{ucc})$ is always an open map. This raises a natural question: can we compute the \textit{canonical cosine structure spaces} $\Delta(L^1(\reals),\star_c), \Delta(L^1(\integers),\star_c), \Delta(L^1(S^1),\star_c)$ and $\Delta(L^1(\integers_n),\star_c)$?\footnote{Again, by ``compute'' we mean ``find a topological space $T$, which is homeomorphic to $\Delta(L^1(G),\star_c)$.''} A major part of the present section is devoted to answering this question affirmatively.

\begin{thm}
$(\Delta(L^1(\reals), \star_c),\tau^*)$ is homeomorphic to $\reals_+\cup\{0\}.$
\label{deltal1reals}
\end{thm}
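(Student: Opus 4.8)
The plan is to leverage the machinery already assembled rather than to construct a homeomorphism from scratch. By Theorem \ref{cosinebijection} the map $\beta_\reals : \Delta(L^1(\reals),\star_c) \longrightarrow \CC(\reals)$ is a bijection, by Theorem \ref{betaGisanopenmap} it is $\tau^*$-to-$\tau_{ucc}$ open, and by Theorem \ref{hhrealsishomeo} the map $\hh_\reals : (\CC(\reals),\tau_{ucc}) \longrightarrow \reals_+\cup\{0\}$ is a homeomorphism. Hence the composition $\Phi := \hh_\reals \circ \beta_\reals$ is an \emph{open bijection} from $(\Delta(L^1(\reals),\star_c),\tau^*)$ onto $\reals_+\cup\{0\}$; in particular its inverse $\Phi^{-1}$ is automatically continuous. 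Thus the whole statement collapses to a single missing ingredient: the continuity of $\Phi$ itself, for an open continuous bijection is a homeomorphism.

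Concretely, $\Phi$ sends $m\in\Delta(L^1(\reals),\star_c)$ to the unique $y\in\reals_+\cup\{0\}$ with $m(f)=\int_\reals f(x)\cos(2\pi yx)\,dx$ for all $f\in L^1(\reals)$. To be allowed to test continuity with sequences I would first observe that the domain is sequential: since $L^1(\reals)$ is separable and every $m\in\Delta(L^1(\reals),\star_c)$ satisfies $\|m\|\leqslant 1$ (indeed $m=m_\phi$ with $\|\phi\|_\infty\leqslant 1$ by Theorem \ref{cosineclassanddual}), the set $\Delta(L^1(\reals),\star_c)$ is a weak*-metrizable subspace of the unit ball of $L^\infty(\reals)=L^1(\reals)^*$. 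It therefore suffices to verify sequential continuity of $\Phi$. So I would fix a sequence $m_n\longrightarrow_{\tau^*} m_*$, set $y_n:=\Phi(m_n)$ and $y_*:=\Phi(m_*)$, and aim to prove $y_n\to y_*$ in $\reals_+\cup\{0\}$.

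The decisive analytic step — and the part I expect to be the real obstacle — is ruling out escape to infinity, i.e., showing that $(y_n)$ is bounded. If it were not, I would pass to a subsequence with $y_{n_k}\to+\infty$. Writing $\cos(2\pi yx)=\tfrac12\bigl(e^{2\pi iyx}+e^{-2\pi iyx}\bigr)$ and invoking the Riemann--Lebesgue lemma, one gets $\int_\reals f(x)\cos(2\pi y_{n_k}x)\,dx\to 0$ for every $f\in L^1(\reals)$; hence $m_*(f)=\lim_k m_{n_k}(f)=0$ for all $f$, contradicting $m_*\neq 0$. This is where the genuine content sits: the Riemann--Lebesgue lemma converts the \emph{nonvanishing} of $m_*$ into an a priori bound on the frequencies $y_n$.

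Once boundedness is secured, the conclusion follows by the same soft subsequence argument used in Theorem \ref{hhrealsishomeo}. An arbitrary subsequence of $(y_n)$ has, by Bolzano--Weierstrass, a further subsequence $y_{n_{k_j}}\to y_{**}\in\reals_+\cup\{0\}$; continuity of $\Phi^{-1}$ then gives $m_{n_{k_j}}=\Phi^{-1}(y_{n_{k_j}})\longrightarrow_{\tau^*}\Phi^{-1}(y_{**})$, while by hypothesis $m_{n_{k_j}}\longrightarrow_{\tau^*} m_*=\Phi^{-1}(y_*)$. Since $\tau^*$ is Hausdorff, $\Phi^{-1}(y_{**})=\Phi^{-1}(y_*)$, whence $y_{**}=y_*$. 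As every subsequence of $(y_n)$ admits a sub-subsequence converging to $y_*$, we conclude $y_n\to y_*$, establishing continuity of $\Phi$ and therefore the desired homeomorphism.
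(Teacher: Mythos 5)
Your proposal is correct, and it follows the paper's reduction exactly up to the decisive step: like the paper, you invoke Theorem \ref{betaGisanopenmap} (openness of $\beta_{\reals}$) and Theorem \ref{hhrealsishomeo} ($\hh_{\reals}$ is a homeomorphism) to conclude that everything hinges on the continuity of $\Phi = \hh_{\reals}\circ\beta_{\reals}$. Where you genuinely diverge is in how that continuity is proved. The paper's argument is hard-analytic and constructive: for each $m_{**}$ in the preimage of a ball it exhibits an explicit weak* neighbourhood $W_{**}$ determined by a \emph{single} test function ($\mathds{1}_{[0,\frac{1}{4y_{**}}]}$, or $\mathds{1}_{[0,1]}$ when $y_{**}=0$), and uses an invertibility property of the sinc function to convert $|m(\mathds{1})-m_{**}(\mathds{1})|<\eta$ into $|y-y_{**}|<(1-\delta)\eps$. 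Your argument is soft: you first justify working with sequences via weak* metrizability of the unit ball of $L^1(\reals)^*$ (using separability of $L^1(\reals)$ and the bound $\|\phi\|_\infty\leqslant 1$ from Theorem \ref{cosineclassanddual}), then rule out escape of frequencies to infinity with the Riemann--Lebesgue lemma (an unbounded subsequence $y_{n_k}$ would force $m_*(f)=0$ for all $f$, contradicting $m_*\neq 0$), and finish with Bolzano--Weierstrass, continuity of $\Phi^{-1}$ (which openness already gives), and Hausdorffness. Each step checks out. What the two approaches buy: yours is shorter, conceptually transparent, and reuses the openness theorem more fully, but it leans on separability of $L^1(\reals)$ (so it would not transfer verbatim to a nonseparable $L^1(G)$) and on the Riemann--Lebesgue lemma as an external ingredient; the paper's proof is self-contained, purely topological in its use of the weak* topology (no metrizability needed), and its explicit test-function template is reused nearly verbatim for $\integers$, $S^1$ and $\integers_n$ in Theorems \ref{deltaell1Z}, \ref{deltal1realsslashintegers} and \ref{deltaell1}, which is presumably why the author chose it.
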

\begin{proof}
By Theorem \ref{betaGisanopenmap} we know that $\beta_{\reals} : (\Delta(L^1(\reals), \star_c),\tau^*) \longrightarrow (\CC(\reals),\tau_{ucc})$ is an open map and by Theorem \ref{hhrealsishomeo} the function $\hh_{\reals} : (\CC(\reals), \tau_{ucc}) \longrightarrow \reals_+\cup\{0\}$ is a homeomorphism. Consequently, it suffices to prove that $\hh_{\reals}\circ\beta_{\reals}$ is continuous. To this end we fix $y_*\in\reals_+\cup\{0\}$ as well as its arbitrary open neighbourhood 
$$U_{\eps} := \bigg\{y \in \reals_+\cup\{0\}\ :\ |y - y_*| < \eps\bigg\}$$

\noindent
where $\eps>0.$ Our task is to prove that 
$$(\hh_{\reals}\circ\beta_{\reals})^{-1}(U_{\eps}) = \bigg\{m \in \Delta(L^1(\reals),\star_c)\ :\ |\hh_{\reals}\circ\beta_{\reals}(m) - y_*| < \eps\bigg\}$$

\noindent
is weak* open and we do it by fixing an arbitrary element $m_{**}\in (\hh_{\reals}\circ\beta_{\reals})^{-1}(U_{\eps})$ and constructing a weak* open set $W_{**}$ such that 
$$m_{**}\in W_{**}\subset (\hh_{\reals}\circ\beta_{\reals})^{-1}(U_{\eps}).$$ 

To begin with, since $m_{**}\in (\hh_{\reals}\circ\beta_{\reals})^{-1}(U_{\eps})$ then $y_{**}:= \hh_{\reals}\circ\beta_{\reals}(m_{**})$ satisfies $|y_{**} - y_*|<\delta\eps$ for some $\delta \in [0,1).$ Further reasoning depends on whether $y_{**}$ is zero or not:
\begin{itemize}
	\item If $y_{**}\neq 0$ then we define a function $g:\reals_+\cup\{0\}\longrightarrow\reals$ with the formula
	$$g(z) := \frac{1}{2\pi y_{**}}\cdot\left(\frac{\pi}{2}\cdot \sinc\left(\frac{\pi}{2}\cdot z\right) - 1\right).$$
	
	\noindent
	Its crucial property is that there exists $\eta>0$ such that 
	\begin{gather}
	\forall_{z\in\reals_+\cup\{0\}}\ |g(z)| < \eta \ \Longrightarrow \ |z-1| < \frac{(1-\delta)\eps}{y_{**}}.
	\label{inverseofsinc}
	\end{gather}
	
	We claim that 
	$$W_{**}:= \bigg\{m \in \Delta(L^1(\reals),\star_c)\ :\ \left|m\left(\mathds{1}_{[0,\frac{1}{4y_{**}}]}\right) - m_{**}\left(\mathds{1}_{[0,\frac{1}{4y_{**}}]}\right)\right| < \eta\bigg\}$$
	
	\noindent
	is the desired weak* open neighbourhood of $m_{**}.$ Indeed, we have 
	\begin{equation*}
	\begin{split}
	\forall_{m\in\Delta(L^1(\reals),\star_c)}\ m\left(\mathds{1}_{[0,\frac{1}{4y_{**}}]}\right) - m_{**}\left(\mathds{1}_{[0,\frac{1}{4y_{**}}]}\right) &= \int_0^{\frac{1}{4y_{**}}}\ \cos(2\pi yx) - \cos(2\pi y_{**}x)\ dx \\
	&\stackrel{x\mapsto \frac{x}{2\pi y_{**}}}{=}  \frac{1}{2\pi y_{**}}\cdot \int_0^{\frac{\pi}{2}}\ \cos\left(\frac{y}{y_{**}}\cdot x\right) - \cos(x)\ dx \\
	&= \frac{1}{2\pi y_{**}}\cdot \left(\frac{\pi}{2}\cdot \sinc\left(\frac{\pi}{2}\cdot \frac{y}{y_{**}}\right)-1\right) = g\left(\frac{y}{y_{**}}\right),
	\end{split}
	\end{equation*}
	
	\noindent
	where $y = \hh_{\reals}\circ\beta_{\reals}(m).$ If $m\in W_{**}$ then $\left|g\left(\frac{y}{y_{**}}\right)\right| < \eta,$ so by \eqref{inverseofsinc} we have $|y-y_{**}| < (1-\delta)\eps.$ Finally, we have 
	$$\forall_{m\in W_{**}}\ |y - y_*| \leqslant |y - y_{**}| + |y_{**} - y_*| < \delta \eps + (1-\delta)\eps = \eps,$$
	
	\noindent
	which proves that $m_{**}\in W_{**} \subset (\hh_{\reals}\circ\beta_{\reals})^{-1}(U_{\eps}).$
	
	\item If $y_{**} = 0$ then we define a function $g:\reals_+\cup\{0\}\longrightarrow\reals$ with the formula $g(z) := \sinc(2\pi z) - 1.$ Its crucial property is that there exists $\eta>0$ such that 
	\begin{gather}
	\forall_{z\in\reals_+\cup\{0\}}\ |g(z)| < \eta \ \Longrightarrow \ |z| < (1-\delta)\eps.
	\label{inverseofsinccase0}
	\end{gather}
	
	We claim that 
	$$W_{**}:= \bigg\{m \in \Delta(L^1(\reals),\star_c)\ :\ \left|m\left(\mathds{1}_{[0,1]}\right) - m_{**}\left(\mathds{1}_{[0,1]}\right)\right| < \eta\bigg\}$$
	
	\noindent
	is the desired weak* open neighbourhood of $m_{**}.$ Indeed, we have 
	\begin{gather*}
	\forall_{m\in\Delta(L^1(\reals),\star_c)}\ m\left(\mathds{1}_{[0,1]}\right) - m_{**}\left(\mathds{1}_{[0,1]}\right) = \int_0^1\ \cos(2\pi yx) - 1\ dx = \sinc(2\pi y) - 1 = g(y).
	\end{gather*}
	
	\noindent
	If $m\in W_{**}$ then $|g(y)| < \eta,$ so by \eqref{inverseofsinccase0} we have $|y| < (1-\delta)\eps.$ We conclude the reasoning as in the previous case.
\end{itemize}
\end{proof}

\begin{thm}
$(\Delta(\ell^1(\integers),\star_c),\tau^*)$ is homeomorphic to $S^1_+\cup\{1\}.$
\label{deltaell1Z}
\end{thm}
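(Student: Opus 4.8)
The plan is to reproduce the template already used in Theorem~\ref{deltal1reals}. By Theorem~\ref{betaGisanopenmap} the map $\beta_{\integers}:(\Delta(\ell^1(\integers),\star_c),\tau^*)\longrightarrow(\CC(\integers),\tau_{ucc})$ is open, and by Theorem~\ref{hhintegersishomeo} the map $\hh_{\integers}:(\CC(\integers),\tau_{ucc})\longrightarrow S^1_+\cup\{1\}$ is a homeomorphism; consequently $\hh_{\integers}\circ\beta_{\integers}$ is an open bijection of $\Delta(\ell^1(\integers),\star_c)$ onto $S^1_+\cup\{1\}$. An open bijection automatically has continuous inverse, so the only thing left to establish is the continuity of $\hh_{\integers}\circ\beta_{\integers}$ itself, after which it is a homeomorphism.

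The decisive simplification relative to the real line is that on $\integers$ the Haar measure is the counting measure, so the indicator $\mathds{1}_{\{1\}}$ of the single point $1$ genuinely belongs to $\ell^1(\integers)$ and furnishes a clean test function (on $\reals$ one was forced to integrate an indicator of an interval, producing the $\sinc$ factor). First I would fix $m\in\Delta(\ell^1(\integers),\star_c)$ and write $z := \hh_{\integers}\circ\beta_{\integers}(m)\in S^1_+\cup\{1\}$, so that the associated element of $\CC(\integers)$ is $k\mapsto\tfrac{z^k+z^{-k}}{2}$. Evaluating at $k=1$ and using $z^{-1}=\overline{z}$ on the unit circle gives
$$m\big(\mathds{1}_{\{1\}}\big)=\frac{z+z^{-1}}{2}=\frac{z+\overline{z}}{2}=\text{Re}(z).$$
Thus $\hh_{\integers}\circ\beta_{\integers}$ factors through the evaluation $m\mapsto m(\mathds{1}_{\{1\}})$, namely $\hh_{\integers}\circ\beta_{\integers}(m)=\text{Re}^{-1}\!\big(m(\mathds{1}_{\{1\}})\big)$, provided $\text{Re}$ is injective on $S^1_+\cup\{1\}$.

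To close the argument I would verify that $\text{Re}$ restricts to a homeomorphism of $S^1_+\cup\{1\}$ onto $(-1,1]$. Parametrizing $z=e^{2\pi i\alpha}$ with $\alpha\in[0,\tfrac12)$, the map $\alpha\mapsto e^{2\pi i\alpha}$ is a homeomorphism of $[0,\tfrac12)$ onto $S^1_+\cup\{1\}$, and $\alpha\mapsto\text{Re}(z)=\cos(2\pi\alpha)$ is a strictly decreasing continuous bijection of $[0,\tfrac12)$ onto $(-1,1]$, hence a homeomorphism; composing, $\text{Re}$ is a homeomorphism of $S^1_+\cup\{1\}$ onto $(-1,1]$. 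Since $m\mapsto m(\mathds{1}_{\{1\}})$ is weak$^*$ continuous by the very definition of $\tau^*$, the displayed factorization exhibits $\hh_{\integers}\circ\beta_{\integers}$ as the composition of this continuous evaluation with the continuous inverse $\text{Re}^{-1}$, so continuity follows.

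I expect the only genuine (though routine) obstacle to be the verification that $\text{Re}^{-1}$ is continuous on $S^1_+\cup\{1\}$, i.e.\ that this subspace is carried homeomorphically onto $(-1,1]$. The subtle point worth emphasising is that $S^1_+\cup\{1\}$ is the \emph{open} upper semicircle together with $1$ but \emph{excluding} $-1$; this exclusion is precisely what makes $\text{Re}$ injective there and accounts for the image being the half-open interval $(-1,1]$ rather than $[-1,1]$. Should one prefer to mirror the explicit $\eps$-$\delta$ bookkeeping of Theorem~\ref{deltal1reals}, one could instead fix a target neighbourhood of $z_*$, select a local continuous inverse of $\cos$ near the corresponding angle, and exhibit the weak$^*$ neighbourhood $\{m:|m(\mathds{1}_{\{1\}})-m_{**}(\mathds{1}_{\{1\}})|<\eta\}$ directly; but the factorization above makes any such computation unnecessary.
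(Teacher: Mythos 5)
Your proof is correct, and its skeleton coincides with the paper's: the same reduction (openness of $\beta_{\integers}$ from Theorem \ref{betaGisanopenmap} together with the homeomorphism $\hh_{\integers}$ of Theorem \ref{hhintegersishomeo} leaves only the continuity of $\hh_{\integers}\circ\beta_{\integers}$ to check), the same test function $\mathds{1}_{\{1\}}$, and the same key identity $m\left(\mathds{1}_{\{1\}}\right)=\frac{z+z^{-1}}{2}=\operatorname{Re}(z)$. Where you genuinely depart from the paper is the packaging of the final step, and your version is tidier. The paper fixes $m_{**}$, sets $g(z)=\operatorname{Re}(z)-\operatorname{Re}(z_{**})$, and runs a local $\eps$-$\delta$-$\eta$ neighbourhood argument (with a triangle-inequality split of $\eps$ into $\delta\eps+(1-\delta)\eps$) whose engine is the ``crucial property'' \eqref{inverseofcos}; that property is asserted there without proof, and it is nothing other than continuity of $\operatorname{Re}^{-1}$ at the point $\operatorname{Re}(z_{**})$. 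You instead factor the map globally as $\hh_{\integers}\circ\beta_{\integers}=\operatorname{Re}^{-1}\circ\,\mathrm{ev}_{\mathds{1}_{\{1\}}}$, where the evaluation is weak* continuous by definition of $\tau^*$, and you prove outright that $\operatorname{Re}\colon S^1_+\cup\{1\}\longrightarrow(-1,1]$ is a homeomorphism; this removes all the bookkeeping and at the same time supplies a proof of the very fact the paper only asserts. Two minor remarks on that last verification: your appeal to strict monotonicity is exactly what is needed (a continuous bijection from the non-compact interval $[0,\tfrac12)$ would not be a homeomorphism for free), and you could bypass the angle parametrization entirely by writing the inverse explicitly as $\operatorname{Re}^{-1}(t)=t+i\sqrt{1-t^2}$, which is visibly continuous on all of $(-1,1]$, including at $t=1$; injectivity of $\operatorname{Re}$ on $S^1_+\cup\{1\}$ is precisely the exclusion of $-1$ and of the lower semicircle that you emphasize.
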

\begin{proof}
As in Theorem \ref{deltal1reals} we argue that it is sufficient to prove that $\hh_{\integers}\circ\beta_{\integers}$ is continuous so we choose an arbitrary open neighbourhood  
$$U_{\eps} := \bigg\{z \in S^1_+\cup\{1\}\ :\ |z - z_*| < \eps\bigg\}$$

\noindent
of a fixed element $z_* \in S^1_+\cup\{1\}.$ Our task is to prove that 
$$(\hh_{\integers}\circ\beta_{\integers})^{-1}(U_{\eps}) = \bigg\{m \in \Delta(\ell^1(\integers),\star_c)\ :\ |\hh_{\integers}\circ\beta_{\integers}(m) - z_*| < \eps\bigg\}$$

\noindent
is weak* open and we do it by fixing an arbitrary element $m_{**}\in (\hh_{\integers}\circ\beta_{\integers})^{-1}(U_{\eps})$ and constructing a weak* open set $W_{**}$ such that 
$$m_{**}\in W_{**}\subset (\hh_{\integers}\circ\beta_{\integers})^{-1}(U_{\eps}).$$ 

To begin with, since $m_{**}\in (\hh_{\integers}\circ\beta_{\integers})^{-1}(U_{\eps})$ then $z_{**}:= \hh_{\integers}\circ\beta_{\integers}(m_{**})$ satisfies $|z_{**} - z_*| < \delta\eps$ for some $\delta \in (0,1).$ We define a function $g:S^1_+\cup\{1\} \longrightarrow\reals$ with the formula
$$g(z) := \frac{z + z^{-1}}{2}  - \frac{z_{**} + z_{**}^{-1}}{2} .$$
	
\noindent
Its crucial property is that there exists $\eta>0$ such that 
\begin{gather}
\forall_{z\in S^1_+\cup\{1\}}\ |g(z)| < \eta \ \Longrightarrow \ |z-z_{**}| < (1-\delta)\eps.
\label{inverseofcos}
\end{gather}
	
We claim that 
$$W_{**}:= \bigg\{m \in \Delta(\ell^1(\integers),\star_c)\ :\ \left|m\left(\mathds{1}_{\{1\}}\right) - m_{**}\left(\mathds{1}_{\{1\}}\right)\right| < \eta\bigg\}$$
	
\noindent
is the desired weak* open neighbourhood of $m_{**}.$ Indeed, we have 
\begin{equation*}
\begin{split}
\forall_{m\in\Delta(\ell^1(\integers),\star_c)}\ m\left(\mathds{1}_{\{1\}}\right) - m_{**}\left(\mathds{1}_{\{1\}}\right)  &= \sum_{k\in\integers}\ \mathds{1}_{\{1\}}(k)\cdot \bigg(\frac{z^k + z^{-k}}{2} - \frac{z_{**}^k + z_{**}^{-k}}{2}\bigg)\\
&= \frac{z + z^{-1}}{2} - \frac{z_{**} + z_{**}^{-1}}{2} = g(z),
\end{split}
\end{equation*}
	
\noindent
where $z = \hh_{\integers}\circ\beta_{\integers}(m).$ If $m\in W_{**}$ then $|g(z)| < \eta,$ so by \eqref{inverseofcos} we have $|z-z_{**}| < (1-\delta)\eps.$ Finally, we have 
$$\forall_{m\in W_{**}}\ |z - z_*| \leqslant |z - z_{**}| + |z_{**} - z_*| < \delta \eps + (1-\delta)\eps = \eps,$$
	
\noindent
which proves that $m_{**}\in W_{**} \subset (\hh_{\integers}\circ\beta_{\integers})^{-1}(U_{\eps}).$ We conclude the reasoning as in Theorem \ref{deltal1reals}.	
\end{proof}

\begin{thm}
$(\Delta(L^1(S^1), \star_c),\tau^*)$ is homeomorphic to $\naturals_0.$
\label{deltal1realsslashintegers}
\end{thm}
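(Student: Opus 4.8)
The plan is to follow the template established in Theorems \ref{deltal1reals} and \ref{deltaell1Z}. Since $\beta_{S^1}$ is an open bijection (combine Theorem \ref{cosinebijection} with Theorem \ref{betaGisanopenmap}) and $\hh_{S^1}:(\CC(S^1),\tau_{ucc})\to\naturals_0$ is a homeomorphism by the preceding theorem, it suffices to prove that the composition $\hh_{S^1}\circ\beta_{S^1}$ is continuous; being then a continuous open bijection, it will automatically be a homeomorphism. The decisive simplification here, compared with $G=\reals$ or $G=\integers$, is that the target $\naturals_0$ carries the discrete topology. Consequently, continuity of $\hh_{S^1}\circ\beta_{S^1}$ is \emph{equivalent} to the assertion that for every $k_*\in\naturals_0$ the preimage $(\hh_{S^1}\circ\beta_{S^1})^{-1}(\{k_*\})$ is weak* open.

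To establish this I would fix such a $k_*$ together with an arbitrary $m_{**}$ in the preimage, so that $\beta_{S^1}(m_{**})$ is the function $x\mapsto\cos(2\pi k_*x)$ and $m_{**}(f)=\int_{S^1}f(x)\cos(2\pi k_*x)\,dx$. The test function that does all the work is the cosine itself, $f_{k_*}(x):=\cos(2\pi k_*x)$, which lies in $L^1(S^1)$ because $S^1$ has finite Haar measure. For an arbitrary $m\in\Delta(L^1(S^1),\star_c)$ with $\beta_{S^1}(m)=(x\mapsto\cos(2\pi kx))$, the orthogonality relations for cosines on $[0,1]$ yield
$$m(f_{k_*})=\int_0^1\cos(2\pi kx)\cos(2\pi k_*x)\,dx=\begin{cases} c_* & \text{if } k=k_*,\\ 0 & \text{if } k\neq k_*,\end{cases}$$
where $c_*=1$ when $k_*=0$ and $c_*=\tfrac12$ when $k_*\geqslant 1$. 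The crucial point is that this quantity jumps from the nonzero value $c_*$ (attained exactly when $k=k_*$) to $0$ for every other index, so a single functional evaluation already pins down the index.

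With this in hand I would simply set
$$W_{**}:=\Big\{m\in\Delta(L^1(S^1),\star_c)\ :\ |m(f_{k_*})-m_{**}(f_{k_*})|<\tfrac{c_*}{2}\Big\},$$
which is weak* open by definition and contains $m_{**}$ (since $m_{**}(f_{k_*})=c_*$). If $m\in W_{**}$ then $m(f_{k_*})$ lies strictly within $c_*/2$ of $c_*$ and hence cannot equal $0$, forcing $k=k_*$; that is, $W_{**}\subset(\hh_{S^1}\circ\beta_{S^1})^{-1}(\{k_*\})$. As $m_{**}$ was arbitrary, the preimage is weak* open, which is exactly what continuity into the discrete space $\naturals_0$ requires.

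I do not anticipate a genuine obstacle in this case: the discreteness of $\naturals_0$ removes the need for the delicate $\sinc$-inversion estimates of Theorem \ref{deltal1reals}, and the only real content is the orthogonality computation that separates $k_*$ from all other indices by a single functional. The one point requiring mild care is the case split $k_*=0$ versus $k_*\geqslant1$ (which changes the value $c_*$), but this affects only the numerical threshold in the definition of $W_{**}$ and nothing conceptual.
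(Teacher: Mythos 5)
Your proposal is correct, and while it follows the paper's overall skeleton (reduce to continuity of $\hh_{S^1}\circ\beta_{S^1}$, exploit the discreteness of $\naturals_0$, and exhibit each singleton preimage as a weak* subbasic neighbourhood determined by a single test function), the key computational ingredient is genuinely different. The paper tests against indicator functions --- $\mathds{1}_{[0,\frac{1}{4k_*}]}$ when $k_*\neq 0$ and $\mathds{1}_{S^1}$ when $k_*=0$ --- and then needs a separation property of the resulting $\sinc$-type function $g$, namely the existence of $\eta>0$ with $|g(k)|<\eta \Rightarrow k=k_*$; this forces a two-case split and an (unproven in detail) inversion estimate for $\sinc$. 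You instead test against the cosine $f_{k_*}(x)=\cos(2\pi k_*x)$ itself, which is admissible precisely because $S^1$ has finite Haar measure, and the orthogonality relations
\begin{equation*}
\int_0^1 \cos(2\pi kx)\cos(2\pi k_*x)\,dx =
\begin{cases}
c_* & k=k_*,\\
0 & k\neq k_*,
\end{cases}
\qquad c_*=\begin{cases}1 & k_*=0,\\ \tfrac12 & k_*\geqslant 1,\end{cases}
\end{equation*}
give a jump from $c_*$ to $0$ that separates $k_*$ from all other indices with the explicit threshold $c_*/2$, no $\sinc$ analysis needed and both cases handled uniformly. What you lose is portability: this trick is special to compact $G$ (for $G=\reals$ the function $x\mapsto\cos(2\pi y_*x)$ is not in $L^1(\reals)$, which is why the paper's indicator-plus-$\sinc$ method is used there and then repeated for $S^1$ for uniformity of presentation). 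Within the present theorem, your argument is the cleaner one.
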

\begin{proof}
As in Theorem \ref{deltal1reals} we argue that it is sufficient to prove that $\hh_{S^1}\circ\beta_{S^1}$ is continuous. Since the topology on $\naturals_0$ is discrete, then we have to show that 
$$\{m_*\} := (\hh_{S^1}\circ\beta_{S^1})^{-1}(\{k_*\})$$

\noindent
is weak* open for every $k_*\in \naturals_0.$ Further reasoning depends on whether $k_*$ is zero or not:

\begin{itemize}
	\item If $k_*\neq 0$ then we define a function $g:\naturals_0\longrightarrow\reals$ with the formula
	$$g(k) := \frac{1}{2\pi k_*}\cdot\left(\frac{\pi}{2}\cdot \sinc\left(\frac{\pi}{2}\cdot \frac{k}{k_*}\right) - 1\right).$$
	
	\noindent
	Its crucial property is that there exists $\eta>0$ such that 
	\begin{gather}
	\forall_{k\in\naturals_0}\ |g(k)| < \eta \ \Longrightarrow \ k = k_*.
	\label{inverseofsincdiscrete}
	\end{gather}
	
	We claim that 
	$$W_* := \bigg\{m \in \Delta(L^1(S^1),\star_c)\ :\ \left|m\left(\mathds{1}_{[0,\frac{1}{4k_*}]}\right) - m_*\left(\mathds{1}_{[0,\frac{1}{4k_*}]}\right)\right| < \eta\bigg\}$$
	
	\noindent
	is the desired weak* open neighbourhood of $m_*.$ Indeed, we have 
	\begin{equation*}
	\begin{split}
	\forall_{m\in\Delta(L^1(S^1),\star_c)}\ m\left(\mathds{1}_{[0,\frac{1}{4k_*}]}\right) - m_*\left(\mathds{1}_{[0,\frac{1}{4k_*}]}\right) &= \int_0^{\frac{1}{4k_*}}\ \cos(2\pi kx) - \cos(2\pi k_*x)\ dx \\
	&\stackrel{x\mapsto \frac{x}{2\pi k_*}}{=}  \frac{1}{2\pi k_*}\cdot \int_0^{\frac{\pi}{2}}\ \cos\left(\frac{k}{k_{**}}\cdot x\right) - \cos(x)\ dx \\
	&= \frac{1}{2\pi k_*}\cdot \left(\frac{\pi}{2}\cdot \sinc\left(\frac{\pi}{2}\cdot \frac{k}{k_*}\right)-1\right) = g(k),
	\end{split}
	\end{equation*}
	
	\noindent
	where $k = \hh_{S^1}\circ\beta_{S^1}(m).$ If $m\in W_*$ then $|g(k)| < \eta,$ so by \eqref{inverseofsincdiscrete} we have $k = k_*.$  This proves that 
	$$\{m_*\} = W_* = (\hh_{S^1}\circ\beta_{S^1})^{-1}(\{k_*\}).$$
	
	\item If $k_* = 0$ then we define a function $g:\naturals_0\longrightarrow\reals$ with the formula $g(k) := \sinc(2\pi k) - 1.$ Its crucial property is that there exists $\eta>0$ such that 
	\begin{gather}
	\forall_{k\in\naturals_0}\ |g(k)| < \eta \ \Longrightarrow \ k = 0.
	\label{inverseofsinccase0discrete}
	\end{gather}
	
	We claim that 
	$$W_*:= \bigg\{m \in \Delta(L^1(S^1),\star_c)\ :\ \left|m\left(\mathds{1}_{S^1}\right) - m_*\left(\mathds{1}_{S^1}\right)\right| < \eta\bigg\}$$
	
	\noindent
	is the desired weak* open neighbourhood of $m_*.$ Indeed, we have 
	\begin{gather*}
	\forall_{m\in\Delta(L^1(S^1),\star_c)}\ m\left(\mathds{1}_{S^1}\right) - m_*\left(\mathds{1}_{S^1}\right) = \int_0^1\ \cos(2\pi kx) - 1\ dx = \sinc(2\pi k) - 1 = g(k).
	\end{gather*}
	
	\noindent
	If $m\in W_*$ then $|g(k)| < \eta,$ so by \eqref{inverseofsinccase0discrete} we have $k = 0.$ This proves that 
	$$\{m_*\} = W_* = (\hh_{S^1}\circ\beta_{S^1})^{-1}(\{0\}),$$
	
	\noindent
	which concludes the proof.
\end{itemize}
\end{proof}

\begin{thm}
$(\Delta(\ell^1(\integers_n),\star_c),\tau^*)$ is homeomorphic to $\integers_{\lceil\frac{n+1}{2}\rceil},$ where $x\mapsto \lceil x\rceil$ is the ceiling function.
\label{deltaell1}
\end{thm}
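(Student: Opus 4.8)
The plan is to follow verbatim the template established in Theorems \ref{deltal1reals}--\ref{deltal1realsslashintegers}. By Theorem \ref{betaGisanopenmap} the bijection $\beta_{\integers_n} : (\Delta(\ell^1(\integers_n),\star_c),\tau^*) \longrightarrow (\CC(\integers_n),\tau_{ucc})$ is an open map, and the last theorem of Section \ref{section:structurespacesandcosineclasses} guarantees that $\hh_{\integers_n} : (\CC(\integers_n),\tau_{ucc}) \longrightarrow \integers_{\lceil\frac{n+1}{2}\rceil}$ is a homeomorphism. Hence $\hh_{\integers_n} \circ \beta_{\integers_n}$ is already a bijective open map, and the whole argument reduces, exactly as before, to checking that this composition is continuous; once continuity is in hand the map is a homeomorphism.

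The decisive feature of this last case is that everything in sight is finite. First I would note that the codomain $\integers_{\lceil\frac{n+1}{2}\rceil}$ is a finite set with the discrete topology. Next, by Theorem \ref{cosinebijection} the domain $\Delta(\ell^1(\integers_n),\star_c)$ is in bijection with the cosine class $\CC(\integers_n)$, which has exactly $\lceil\frac{n+1}{2}\rceil$ elements, so $\Delta(\ell^1(\integers_n),\star_c)$ is itself finite. Since $\tau^*$ is a Hausdorff topology and a finite Hausdorff space is necessarily discrete, the domain carries the discrete topology as well. Consequently any map out of it is continuous; in particular $\hh_{\integers_n}\circ\beta_{\integers_n}$ is continuous, and combined with the openness above we conclude it is a homeomorphism.

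Should one prefer to stay closer to the concrete style of Theorem \ref{deltal1realsslashintegers} (the $S^1$ case, where the target is likewise discrete), the continuity can also be exhibited by hand: for each $l_* \in \integers_{\lceil\frac{n+1}{2}\rceil}$ put $m_* := (\hh_{\integers_n}\circ\beta_{\integers_n})^{-1}(l_*)$ and show $\{m_*\}$ is $\tau^*$-open. Evaluating an arbitrary $m$ at the test functions $\mathds{1}_{\{j\}}$, $j\in\integers_n$, gives $m(\mathds{1}_{\{j\}}) = \cos\!\big(\tfrac{2\pi l j}{n}\big)$ where $l = \hh_{\integers_n}\circ\beta_{\integers_n}(m)$. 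Because the finitely many vectors $\big(\cos(\tfrac{2\pi l j}{n})\big)_{j\in\integers_n}$ are pairwise distinct as $l$ ranges over the index set, one fixes a coordinate $j_*$ and a radius $\eta>0$ so small that $|m(\mathds{1}_{\{j_*\}}) - m_*(\mathds{1}_{\{j_*\}})| < \eta$ already forces $l = l_*$; the associated weak* basic set is then precisely $\{m_*\}$.

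I expect there to be essentially no obstacle here, and that is the whole point: the genuine analytic work of the earlier cases (the sinc inversions, the Bolzano--Weierstrass compactness arguments) simply evaporates once finiteness is invoked. The only two points deserving a single sentence of justification are that $\Delta(\ell^1(\integers_n),\star_c)$ is finite, which is immediate from Theorem \ref{cosinebijection}, and that the cosine vectors are genuinely distinct across $\integers_{\lceil\frac{n+1}{2}\rceil}$, which is already encoded in the homeomorphism $\hh_{\integers_n}$.
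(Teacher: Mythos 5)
Your first argument is correct, and it takes a genuinely different (and slicker) route than the paper. The paper never invokes finiteness of the structure space; it runs the same machinery as in the $S^1$ case: for each $k_*$ it defines $g(k) := \cos\left(\frac{2\pi k}{n}\right) - \cos\left(\frac{2\pi k_*}{n}\right)$, picks $\eta > 0$ so small that $|g(k)| < \eta$ forces $k = k_*$, and then verifies that the weak* basic neighbourhood $W_* = \left\{m \ : \ \left|m\left(\mathds{1}_{\{1\}}\right) - m_*\left(\mathds{1}_{\{1\}}\right)\right| < \eta\right\}$ collapses to $\{m_*\}$, using the identity $m\left(\mathds{1}_{\{1\}}\right) = \cos\left(\frac{2\pi k}{n}\right)$ where $k = \hh_{\integers_n}\circ\beta_{\integers_n}(m)$. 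So the paper's proof is essentially your second, ``by hand'' sketch with the specific coordinate $j_* = 1$. Your finiteness route --- $\Delta(\ell^1(\integers_n),\star_c)$ is finite by Theorem \ref{cosinebijection}, $\tau^*$ is Hausdorff, finite Hausdorff spaces are discrete, hence every map out of the domain is continuous (indeed, any bijection between finite discrete spaces is automatically a homeomorphism, so you do not even need Theorem \ref{betaGisanopenmap} here) --- is shorter and isolates exactly why this case is trivial. What it gives up is the explicit separating test function, which the paper's computation exhibits and which ties directly to the concrete discrete cosine transform formula displayed at the end of the section.

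One caution about your second sketch: pairwise distinctness of the vectors $\left(\cos\left(\frac{2\pi l j}{n}\right)\right)_{j\in\integers_n}$ does not by itself produce a \emph{single} coordinate $j_*$ separating $l_*$ from all other $l$ simultaneously; a priori different pairs could be separated by different coordinates. To repair this, either take the weak* basic neighbourhood determined by all the test functions $\mathds{1}_{\{j\}}$, $j\in\integers_n$ (finitely many, so still admissible), or observe --- as the paper implicitly does --- that $j_* = 1$ always works, because $l \mapsto \cos\left(\frac{2\pi l}{n}\right)$ is strictly decreasing on $\left\{0,1,\ldots,\lceil\frac{n+1}{2}\rceil - 1\right\} \subset [0,\frac{n}{2}]$ and hence injective on the whole index set.
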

\begin{proof}
As in Theorem \ref{deltal1realsslashintegers} our task is to prove that
$$\{m_*\} := (\hh_{\integers_n}\circ\beta_{\integers_n})^{-1}(\{k_*\})$$

\noindent
is a weak* open neighbourhood of $m_*$ for every $k_* \in \integers_n.$ We define a function $g:\integers_{\lceil\frac{n+1}{2}\rceil} \longrightarrow\reals$ with the formula
$$g(k) := \cos\left(\frac{2\pi k}{n}\right) - \cos\left(\frac{2\pi k_*}{n}\right).$$
	
\noindent
Its crucial property is that there exists $\eta>0$ such that 
\begin{gather}
\forall_{k\in\integers_n}\ |g(k)| < \eta \ \Longrightarrow \ k=k_*.
\label{inverseofcosintegersn}
\end{gather}
	
We claim that 
$$W_*:= \bigg\{m \in \Delta(\ell^1(\integers_n),\star_c)\ :\ \left|m\left(\mathds{1}_{\{1\}}\right) - m_*\left(\mathds{1}_{\{1\}}\right)\right| < \eta\bigg\}$$
	
\noindent
is the desired weak* open neighbourhood of $m_*.$ Indeed, we have 
\begin{equation*}
\forall_{m\in\Delta(\ell^1(\integers_n),\star_c)}\ m\left(\mathds{1}_{\{1\}}\right) - m_*\left(\mathds{1}_{\{1\}}\right)  = \cos\left(\frac{2\pi k}{n}\right) - \cos\left(\frac{2\pi k_*}{n}\right) = g(k),
\end{equation*}
	
\noindent
where $k = \hh_{\integers_n}\circ\beta_{\integers_n}(m).$ If $m\in W_*$ then $|g(k)| < \eta,$ so by \eqref{inverseofcosintegersn} we have $k=k_*.$ This proves that 
$$\{m_*\} = W_* = (\hh_{\integers_n}\circ\beta_{\integers_n})^{-1}(\{k_*\}),$$

\noindent
which concludes the proof.
\end{proof}

The last four theorems can be summarized as follows:

\begin{thm}
The cosine structure space $\Delta(L^1(G),\star_c)$ is homeomorphic to:
\begin{itemize}
	\item $\reals_+\cup\{0\}$ if $G = \reals,$
	\item $S^1_+\cup\{1\}$ if $G = \integers,$
	\item $\naturals_0$ if $G = S^1,$
	\item $\integers_{\lceil \frac{n+1}{2}\rceil}$ if $G = \integers_n.$
\end{itemize}
\label{fourhomeomorphisms}
\end{thm}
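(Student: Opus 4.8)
The plan is to recognise that this theorem is nothing more than a consolidation of Theorems \ref{deltal1reals}, \ref{deltaell1Z}, \ref{deltal1realsslashintegers} and \ref{deltaell1}, each of which settles precisely one of the four listed groups. Consequently the proof itself reduces to a single sentence citing those four results; what deserves comment is the common template underpinning all of them, as it clarifies why no further argument is required.

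In each of the four cases the logic runs identically. Theorem \ref{cosinebijection} supplies a bijection $\beta_G : \Delta(L^1(G),\star_c) \longrightarrow \CC(G)$, and Theorem \ref{betaGisanopenmap} tells us this bijection is an open map. On the codomain side, the homeomorphism $\hh_G$ from Section \ref{section:structurespacesandcosineclasses} identifies $(\CC(G),\tau_{ucc})$ with the corresponding simple space ($\reals_+\cup\{0\}$, $S^1_+\cup\{1\}$, $\naturals_0$ or $\integers_{\lceil\frac{n+1}{2}\rceil}$). Since an open continuous bijection is automatically a homeomorphism, the only missing ingredient is the continuity of $\beta_G$, or equivalently of $\hh_G\circ\beta_G$.

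That continuity is exactly what the four preceding theorems establish, and it is the sole substantive step. In every case one fixes a target value, selects a single indicator function --- $\mathds{1}_{[0,\frac{1}{4y_{**}}]}$ for $\reals$, $\mathds{1}_{\{1\}}$ for $\integers$ and $\integers_n$, and $\mathds{1}_{[0,\frac{1}{4k_*}]}$ for $S^1$ --- and evaluates $m$ on it. The resulting quantity is controlled by the $\sinc$ function (for $\reals$ and $S^1$) or by the map $z\mapsto \frac{z+z^{-1}}{2}$ (for $\integers$ and $\integers_n$), both of which are locally injective near the point in question; this local injectivity yields the threshold $\eta>0$ that renders the preimage of a basic neighbourhood weak* open.

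The main obstacle, then, lives entirely inside the four source theorems and concerns this continuity step; the present theorem carries no additional difficulty. I would therefore simply invoke Theorems \ref{deltal1reals}, \ref{deltaell1Z}, \ref{deltal1realsslashintegers} and \ref{deltaell1}, noting that together they exhaust the four groups $\reals, \integers, S^1$ and $\integers_n$.
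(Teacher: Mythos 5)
Your proposal is correct and matches the paper exactly: the paper offers no separate proof for Theorem \ref{fourhomeomorphisms}, presenting it simply as a summary of Theorems \ref{deltal1reals}, \ref{deltaell1Z}, \ref{deltal1realsslashintegers} and \ref{deltaell1}, which is precisely what you do. Your additional commentary on the common template (open bijection via Theorems \ref{cosinebijection} and \ref{betaGisanopenmap}, with continuity of $\hh_G\circ\beta_G$ as the sole substantive step) accurately reflects how those four source theorems are structured.
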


It is high time we reaped what we have sown and enjoyed the fruits of our labour. Due to Theorem \ref{fourhomeomorphisms} we know that 
\begin{itemize}
	\item $C_0(\Delta(L^1(\reals),\star_c))$ is homeomorphic to $C_0(\reals_+\cup\{0\}),$
	\item $C_0(\Delta(\ell^1(\integers),\star_c))$ is homeomorphic to $C_0(S^1_+\cup\{1\}),$
	\item $C_0(\Delta(L^1(S^1),\star_c))$ is homeomorphic to $C_0(\naturals_0),$
	\item $C_0(\Delta(\ell^1(\integers_n),\star_c))$ is homeomorphic to $C_0(\integers_{\lceil \frac{n+1}{2}\rceil}) = C(\integers_{\lceil \frac{n+1}{2}\rceil}) = \complex^{\lceil \frac{n+1}{2}\rceil}.$
\end{itemize}

\noindent
Hence, the Gelfand transform $\widehat{f} \in C_0(\Delta(L^1(G),\star_c))$ of a function $f\in L^1(G)$ manifests itself as
\begin{itemize}
	\item the \textit{classical cosine transform} 
	$$\forall_{y\in \reals_+\cup\{0\}}\ \widehat{f}(y) = \int_{\reals}\ f(x)\cos(2\pi yx)\ dx,$$
	
	\noindent
	if $G = \reals,$
	
	\item the \textit{discrete-time cosine transform} 
	$$\forall_{z \in S^1_+\cup\{1\}}\ \widehat{f}(z) = \sum_{k\in\integers}\ f(k)\cdot \frac{z^k + z^{-k}}{2},$$
	
	\noindent
	if $G = \integers,$
	
	\item the $k-$th cosine coefficient in the Fourier series
	$$\forall_{k\in \naturals_0}\ \widehat{f}(k) = \int_0^1\ f(x)\cos(2\pi kx)\ dx,$$
	
	\noindent
	if $G = S^1,$ 
	
	\item the \textit{discrete cosine transform}
	$$\forall_{l\in \integers_{\lceil \frac{n+1}{2}\rceil}}\ \widehat{f}(l) = \sum_{k=1}^n f(k)\cos\left(\frac{2\pi lk}{n}\right),$$
	
	\noindent
	if $G = \integers_n.$ 
\end{itemize}

\section*{Epilogue}

Our journey has come to an end and it is instructive to pause one last time and, with the benefit of hindsight, reflect on how far we have travelled and what lies ahead. Last section taught us that $\Delta(L^1(G),\star_c)$ is (homeomorphic to) a relatively simple topological space if $G = \reals,\integers,S^1$ or $\integers_n$. As a result, we rediscovered the cosine transforms as special manifestations of the Gelfand transform. However, one would be wrong thinking that the topic has been exhausted. Four homeomorphisms, which appear in Theorem \ref{fourhomeomorphisms}, force all four functions $\beta_{\reals}, \beta_{\integers},\beta_{S^1}$ and $\beta_{\integers_n}$ to be homeomorphisms as well. This raises the very natural question: is it true that $\beta_G:\Delta(L^1(G),\star_c)\longrightarrow \CC(G)$ is a homeomorphism for every locally compact abelian group $G$? Unfortunately, despite our best efforts we were not able to answer that question. Thus, we leave it as an open problem with the intention of stimulating future research in the fascinating field of cosine transforms.

\end{document}